\documentclass[12pt,twoside]{amsart}          
\usepackage[utf8]{inputenc}
\usepackage[T1]{fontenc}
\usepackage{amsmath, amssymb, amsthm,enumitem}            
\usepackage{amstext, amsfonts, a4}


 \usepackage[colorlinks=true]{hyperref}
\hypersetup{urlcolor=blue, citecolor=red}

\usepackage{color,transparent}
\usepackage{graphics,graphicx,subfigure}

\usepackage{tree-dvips}
\usepackage{qtree}

\usepackage{times} 

\usepackage{graphics,graphicx,subfigure}
\newcommand{\gr}{Grothendieck ring }

\newcommand{\frml }{formula }

\newcommand{\frmlsf}{formulas}

\newcommand{\eqf}{quantifier elimination}

\newcommand{\N}{ \mathbb{N}}
\newcommand{\Z}{ \mathbb{Z}}

\newcommand{\st}{such that }

\newcommand{\elmts}{ elements }
\newcommand{\elmt}{ element }

\newcommand{\G}{Grothendieck }

\newcommand{\ens}{ set }
\newcommand{\enss}{ sets }

\newcommand{\ess}{simple sets }

\newcommand{\dd}{definable }
\newcommand{ \sse}{ simple sets }

\theoremstyle{plain}
    \newtheorem{teo}{Theorem}[section]
    \newtheorem{lem}[teo]{Lemma}

    \newtheorem{de}[teo]{Definition}
     \newtheorem{cor}[teo]{Corollary}
    \newtheorem{rem}[teo]{Remark} 
    \newtheorem{ex}[teo]{Example} 
\usepackage{comment}
\usepackage{float} 
\setcounter{MaxMatrixCols}{30}%


\setlength\textwidth{17cm}          
\setlength\topmargin{-2cm}          
\setlength\evensidemargin{+0cm}     
\setlength\textheight{24cm}         
\setlength\oddsidemargin{\evensidemargin}

\hyphenation{}





\def\ogg~{{\rm \og}}   


\def\emptyset{\varnothing}








\makeatletter
\def\BState{\State\hskip-\ALG@thistlm}
\makeatother


\title{Grothendieck ring of the pairing function without cycles}
\subjclass[2010]{Primary 03C07, Secondary 03C10, 03C98, 08C10}

\address{MSRI, 17 Gauss Way, Berkeley, CA 94720, États-Unis}
\email{esther.elbaz.saban@normalesup.org}

\author{Esther Elbaz}

\date{June 11, 2020}
\begin{document}
\begin{abstract}
A bijection $(l,r)$ between $M^2$ and $M$ is said to be a pairing function with no cycles, if any composition of its coordinate functions has no fixed point.
We compute here the Grothendieck ring of the pairing function without cycles to be isomorphic to $\Z^2\simeq \Z[X]/(X-X^2)$. More generally, for any $n\in \N^*$ and any bijetion without cycles betwen $M$ and $M^n$, the exact same method proves that $K_0(M)=\Z[X]/(X-X^n)$. 

\end{abstract}

\maketitle

\section{Introduction}\label{s.introduction}  A bijection, $(l,r): M \rightarrow M^2; x\mapsto (l(x), r(x))$, between a set and its square, is called a pairing function. It is said to have no cycles if any function obtained as a composition of $l$ and $r$ have no fixed point.

Sets endowed with pairing
functions are also known as Jonsson-Tarski algebras or Cantor
algebras, they have been studied in universal algebra (for instance in
\cite{JT,AS}), and admit rich groups of automorphisms \cite{H}. They have been studied by several authors in a model theoretic perspective. One of the motivation being that they are a simple example of a theory that is stable and cannot be obtained as the limit of superstable theories. As such, they can help understanding super-stable theories by providing counterexamples to their properties.
Pairing function without cycles admit quantifier elimination in the language $\lbrace l, r\rbrace$ and is complete (\cite{PB}).

In this article, we compute the Grothendieck ring of pairing function without cycles. 

The \gr of a structure is obtained by identifying definable sets that are in definable bijection, with the ring structure being induced by laws reflecting respectively the disjoint union (for the additive law) and the Cartesian product (for the multiplicative law).

More precisely:

\begin{de} Let $M$ be a structure. Let us consider $\operatorname{Def}(M)$ the set of subsets in the disjoint union of all finite Cartesian powers of $M$ that are definable with parameters. Let $\sim$ be the equivalent relation defined on $\operatorname{Def}(M)$ by $A\sim B$ if and only if there exists a definable bijection between $A$ and $B$. For any $A\in \operatorname{Def}(M)$, we denote by $[A]$ the class of $A$. 

We define two laws on the quotient set $ \operatorname{Def}(M) $ :
\begin{itemize}
\item An additive law that corresponds to the disjoint union: $[A]+[B]=[A\cap B]+[A\cup B]$.
\item A multiplicative law that corresponds to the Cartesian product: $[A]\times[B]=[A\times B]$.
\end{itemize}
These two laws are compatible with the equivalence relation and the set $ \operatorname{Def}(M) $ endowed with these two laws is the Grothendieck semi-ring of $M$.

\end{de}

Beware that the additive law is not cancelative: $a+b=a+c$ doesn't imply $b=c$. To remedy this, let us consider the equivalence relation defined on $ \operatorname{Def}(M) $ by: $a, b\in \operatorname{Def}(M) $ are equivalent if and only if there exists $c\in \operatorname{Def}(M) $ such that $a+c=b+c$. The quotient set, denoted $\widetilde{\operatorname{Def}}(M)$, is a cancelative monoid for the additive law. Hence, there exists a unique ring (up to isomorphism over $(\widetilde{\operatorname{Def}}(M), +, \times) $) that embeds this last quotient and that is minimal for this property.

\begin{de}
The minimal ring that embeds $(\widetilde{\operatorname{Def}}, +, \times) $ is called the \textbf{Grothendieck ring of} $M$ and is denoted $K_0(M)$.
\end{de}

This ring has been defined by T. Scanlon and J. Krajicek in \cite{SK} who built up a dictionary between the combinatorial properties of a structure and the algebraic properties of its \G ring. For example, a structure admits a zero \G ring, if and only if, there exists a definable set in definable bijection with itself deprived of a point.


This last property called Onto Pigeon Hole principle and abreged Onto-PHP has been used to prove the triviality of \G rings in the case of several valued fields (for example $\Z$-valued fields \cite{HC}). 

In general case, two elementarily equivalent structures do not
necessarily have isomorphic Grothendieck ring. Nevertheless, we prove
here that all pairing function without cycles admit $\Z^2=Z[X]/(X-X^2)$
as their Grothendieck ring. To prove this, we will highlight special class of definable sets that we will call simple sets and that are definable by positive conjunction of positive atomic formulas. Every definable set being a Boolean combination of simple sets, the \gr is generated by their classes. It will prove that they are each either isomorphic to $M$ or to a singleton implying, because of the relation $[M]=[M]^2$, that the \gr is naturally a quotient of $Z[X]/(X-X^2)$. It will remain to show that there is no further relation which will be done by examining the action of definable injection on definable sets.  

The following more general theorem can be proved in exactly the same way:

\begin{teo}
Let $n\in \N^*$ and let $L:=\lbrace l_1,\ldots, l_n\rbrace$ be a language consisting of $n$ symbols of functions. Let $M$ be a $L$-structure such that $(l_1,\ldots, l_n): M \rightarrow M^n$ is a bijection. Assume furthermore that for every term $t$ in $\lbrace l_1,\ldots, l_n\rbrace$, $M\models t(x)\neq x$.
Then $K_0(M)=\Z[X]/(X-X^n)$.
\end{teo}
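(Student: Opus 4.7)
The plan is to generalize the four-step scheme sketched in the introduction for the binary case. First, the $n$-ary version of the quantifier-elimination result of \cite{PB} should go through by the same back-and-forth argument, with isomorphisms between finitely generated substructures amalgamated coordinate-wise and the no-cycle axioms preventing accidental identifications. Granted QE, every definable set is a Boolean combination of atomic equalities $t(\bar x,\bar a)=s(\bar x,\bar a)$; define a \emph{simple} set as one given by a positive conjunction of such equalities. Inclusion-exclusion in the Grothendieck semi-ring then shows that $K_0(M)$ is generated as a ring by classes of simple sets, since products of simple sets are simple.

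Next, I would classify simple sets up to definable bijection. A simple set $S$ is cut out by a system $\bigwedge_i t_i(\bar x)=s_i(\bar x)$. Using that $(l_1,\ldots,l_n)$ is a bijection, each equation either becomes a tautology, reduces to some $\tau(x)=x$ (forbidden by the no-cycle hypothesis, so $S$ is empty), or forces a variable to equal a specific term in the remaining ones. Iterating, every non-empty simple $S$ is in definable bijection with some $M^d$, $d\geq 0$. The bijection $M\leftrightarrow M^n$ and its iterates further yield $M^k\leftrightarrow M^{k+(n-1)}$ for every $k\geq 1$, so in $K_0(M)$ the class $[S]$ is either $1$ or one of $[M],[M]^2,\ldots,[M]^{n-1}$. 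Consequently the map $\Z[X]/(X-X^n)\to K_0(M)$, $X\mapsto[M]$, is a well-defined surjective ring homomorphism: it is well-defined because $[M^n]=[M]$ gives $[M]^n=[M]$, and surjective by the generation statement above.

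The main obstacle is injectivity: showing that if $\sum_{d=0}^{n-1}a_d[M]^d=\sum_{d=0}^{n-1}b_d[M]^d$ in $K_0(M)$ with $a_d,b_d\in\Z$, then $a_d=b_d$ for each $d$. Following the outline given for the binary case, I would carry this out by a careful analysis of definable injections between finite disjoint unions of copies of the $M^d$'s, extracting a dimension-like invariant that is additive under disjoint union, multiplicative under product, preserved by definable bijection, and that distinguishes $[M^d]$ from $[M^e]$ whenever $d\not\equiv e\pmod{n-1}$. The no-cycle hypothesis will be essential in ruling out pathological definable injections that would otherwise collapse these ``dimensions'', and this step is expected to be the most delicate part of the proof.
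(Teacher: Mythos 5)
Your first two steps (quantifier elimination, simple sets generating the semi-ring, classification of non-empty simple sets up to definable bijection with Cartesian powers of $M$, hence a surjection $\Z[X]/(X-X^n)\twoheadrightarrow K_0(M)$ sending $X$ to $[M]$) follow the same route as the paper and are fine. But the last paragraph, which you yourself identify as "the most delicate part," is exactly where all of the paper's actual work lies, and you have not supplied the idea that makes it go through: you only postulate the existence of a "dimension-like invariant that is additive under disjoint union, multiplicative under product" and that separates the powers of $[M]$. No such naive invariant is exhibited, and it is not clear one exists in that form; note moreover that for $n=2$ all positive powers of $[M]$ coincide, so what must really be separated is the "finite part" from the "$M$ part" of a class, i.e.\ one must show that a definable bijection (after adding a common definable set $C$, since $K_0$ is the group completion of a non-cancellative semi-ring) between $(M\times F)\sqcup G$ and $(M\times F')\sqcup G'$ forces $|F|=|F'|$ and $|G|=|G'|$.

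The paper's mechanism for this is quite specific and none of it appears in your proposal: a normal form for definable injections (Lemma \ref{formidable}), the extension of a normal injection from $B\setminus C$ to $B$ (Lemma \ref{prol}), the fact that a normal injection sends simple sets of depth $p$ to simple sets of a fixed depth $q$ (Lemma \ref{hhhhk2}), the Noetherian topology generated by simple sets of fixed depth whose irreducible closed sets are the simple sets (Lemma \ref{noe}), elementary decompositions $A=\sqcup_i(B_i\setminus C_i)$ adapted to the given injection (Lemmas \ref{rrrrrr2}, \ref{7.2}, \ref{jjj}), and finally the tree of a decomposition together with Lemma \ref{ajf}, which lets one count: the invariants actually used are the number of irreducible components of the closure (recovering $|F|+|G|$) and the number of singleton irreducible components occurring among the negative sets (recovering $|G|$), both shown to be preserved by an adapted injection via Lemma \ref{7.2}. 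Without constructing such an invariant and proving its preservation under arbitrary definable injections, your argument establishes only that $K_0(M)$ is a quotient of $\Z[X]/(X-X^n)$, not that it equals it; as it stands the proposal has a genuine gap at its central step.
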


To make the redaction simple, we restrict ourselves to the case $n=2$.

\section{Representation of formulas and definable sets by binary trees}

To any element we can associate an infinite binary tree: the left child of a node corresponds to its image by the function $l$ and the right child corresponds to its image by the function $r$.\\
We can also use binary trees to represent certain definable sets and definable functions. We explain how in this section.

\begin{de} 
Let $T$ be an infinite and complete binary tree.
We define the \textbf{depth} of a node recursively:
\begin{itemize}
\item by convention, the root is of depth 0;
\item if a node is of depth $p$, then its children are of
depth $p+1$.
\end{itemize}

There is thus one node at depth 0, two at depth
$1, \ldots, $ and $2^p$ at depth $p$. So we count $2^{p+1}-1$ nodes at depth less than or equal to $p$.
\end{de}


\begin{de}{\textbf{Labelling of the nodes}} \\
Let $T$ be an infinite binary tree.
We label the nodes of $T$ by associating them with a finite sequence
of elements of $\lbrace l, r \rbrace$ as follows:
\begin{itemize}
\item the root is labelled by the empty sequence;
\item if a node is labelled by a sequence $\lbrace u_1, \ldots, u_n
\rbrace $, then his left child is labelled by $\lbrace
u_1, \ldots, u_n, l\rbrace $ and his right child by $\lbrace
u_1, \ldots, u_n, r\rbrace$.
\end{itemize}
\end{de}
The nodes of depth $p$ are thus labelled by a sequence of
$p$ elements.

\begin{rem} 
We will denote $T_l$ (respectively $T_r$) the left subtree
(respectively right subtree) of $T$, that is the full subtree of $T$ of root $l(x)$
(respectively of root $r(x)$) where $x$ is the root of $T$.
\end{rem}

In the following, all trees considered will be binary infinite trees.

\begin{de}{\textbf{Tree associated to an element}} 
Let $ x $ be an element of $ M $. \\We associate to $ x $ a binary tree labelled
by elements of $ M $ as follows:
\begin{itemize}
\item its root is labeled by $ x $;
\item if a node is labeled by $ y $, then its left child is labeled
by $l(y) $ and its right child by $r(y)$.
\end{itemize}

We will denote this tree $ T_x $.
\\
The nodes of $ T_x $ are enumerated by finite sequences of elements of
$ \lbrace l, r \rbrace $ as explained above.
\\
If $ i $ is a finite sequence of $ \lbrace l, r \rbrace$, we will denote $ x_i $ the
term that labels the $ i $-th node of $ T_x $.
\\
This notation is consistent with the one proposed above: $ l(x) $
which is denoted $ x_l $ corresponds to the left child of the tree of root
$x $; and $ r (x) $ denoted $ x_r $ corresponds to its right child.
\\
The nodes of $ T_x $ correspond exactly to the set of one variable-terms in $ x $.
\end{de}

\begin{rem} 
The fact that the pairing function has no cycles is equivalent to saying that for every $x\in M$ there is no branch of $T_x$ where an element of $M$ appears more than once.
\end{rem}

\begin{de}{\textbf{Depth of a formula, a set, a
definable function}} 
Let $ \psi $ be a formula.
We call \textbf{depth in $ x $} of $ \psi $ the largest integer $ n $
such that $ \psi $ involves a term in $ x $ corresponding to a node of depth $ n $
in the tree $ T_x $.
\\
Let $ A $ be a definable set. We call \textbf{depth in $ x $}
of $ A $ the smallest integer $ n $ such that there is a formula defining
$ A $ whose depth in $x$ is $ n $. \\
Let $ h $ be a definable application. We call \textbf{depth in
$ x $} of $ h $ the smallest integer $ n $ such that there is a formula
defining the graph of $ h $ whose depth in $x$ is $ n $. 
\end{de}

\section{Primitive formula}
Thanks to \eqf, any formula is equivalent to a Boolean combination of atomic formulas.

\begin{de}{\textbf{Primitive formula}} 
Let us consider a formula $\phi$ with possibly several free variables. If $\phi$ is a conjunction of equalities and inequalities between terms, then we say that $\phi$ is \textbf{primitive}. 
\\
If $\phi$ is a conjunction of equalities between terms, then we say that $\phi$ is
\textbf{positive primitive}.
\end{de}

\begin{de}{\textbf{Tree associated to a primitive formula with one variable}} 
\begin{enumerate}[label=(\alph*)]
\item \label{item_a} Let $\psi(x)=\wedge_j \phi_j(x)$ be a primitive formula with a single free variable $x$. We can associate to it a binary tree, denoted $T(\psi)$, such that for every finite sequence of $\lbrace l, r\rbrace$, $i$, the $i$-th node of $T$ is
labelled by the list of formulas that involve the term
corresponding to $x_i$.
By abuse, we can also say that this tree is associated with
$\psi(M)$ or that it defines $\psi(M)$.
\item  If $F(x, y)$ is a primitive formula with $2$ variables,
we associate to $F$ the\textbf{ tree in $x$} constructed as in \ref{item_a} while considering $y$ as a parameter.
\end{enumerate}
\end{de}

\begin{rem} 
The association of a tree to such formulas is a bijection. But two equivalent formulas can have distinct trees.
\end{rem}

\begin{ex} 
1. Let $c\in M$. Consider the formula $\psi(x):=(l(x)=c) \wedge
(l(x)=r(x))$.
The tree associated with $\psi(x)$ is the tree, $T$, whose root has a left child labelled by $\lbrace l(x)=r(x), l(x)=c \rbrace $ and a right child labelled by $\lbrace l(x)=r(x)\rbrace$. The other nodes
are labelled by the empty list.

\Tree[ {$\lbrace l(x)=c, r(x)=l(x)\rbrace$} r(x)=l(x) ].

The formula associated with the tree $T$ is the formula  $\phi(T)(x)=(l(x)=c) \wedge
(l(x)=r(x)) \wedge (l(x)=c)$.\\
The formulas $\psi(x)$ and  $\phi(T)(x)$ are equivalent.

\medskip
\noindent
2. Let $c, c'\in M$ and $\psi(x, y)$ be the formula $ (l(x)=c) \wedge
(r(x)=l(y))$. \\
The tree in $x$ associated with this formula is the tree whose root has a left child labelled by $\lbrace l(x)=c \rbrace $, and a right child labelled by $\lbrace (r(x)=l(y)) \rbrace$.

\Tree[ {$\lbrace l(x)=c, r(x)=l(x)\rbrace$} r(x)=l(y) ].
\end{ex}

\begin{rem} 
Let $T$ be a tree labelled as in \ref{treef}.
Suppose the root of $T$ is labelled by the empty list, then $\phi(T)(x)$ is equivalent to
$\phi (T_l) (l(x)) \wedge \phi(T_r) (r(x))$.
\\
Since $\Theta$ is bijective, for every $y\in \phi(T_l) (M)$ and every
$z \in \phi(T_r)(M)$,
the tree $T'$ such that $T'_l=T_y$, $T'_r=T_z$, and whose root is
labelled by the empty list,
is the tree of an element of $\phi(T) (M)$.
\end{rem}

Reciprocally we can associate a one-variable primitive formula to certain trees.

\begin{de}\label{treef}{\textbf{Primitive formula associated with a tree}}  
Let $T$ be a binary tree whose nodes are labelled by formulas. 
Let us suppose two things:
\begin{enumerate}
\item For every \textbf{$i$}, finite sequence of $\lbrace l, r\rbrace$, the \textbf{$i$}-th node of $T$ is labelled with a finite list of equalities or inequalities between $x_i$ and other terms in $x$ (possibly constant), that is by a list of formulas of the form: 
\begin{itemize}
\item $t(x)=c $ 
\item $t(x)=t'(x)$
\item $t(x) \neq c$
\item $t(x) \neq t'(x)$
\end{itemize}
where $t(x)$ is the term corresponding to $x_i $, $t'(x)$ is a term having $x$ as
a single variable and $c$ is a constant.
\item  Only a finite number of nodes of $T$ are labelled by a non-empty list.
\end{enumerate}

\medskip
\noindent
We can associate with $T$ the formula obtained as the conjunction of all the formulas that appear in the labels of $T$.
\\
We denote $\phi(T)(x)$ this (primitive) formula.
\end{de}

\section{Closed tree and simple formulas}

\begin{de} \label{closed_tree}
\begin{enumerate}[label=(\alph*)]
\item 
Let $\psi$ be a primitive formula and $T$ be its associated tree.
We say that $T$ (or by abuse $\psi$) is \textbf{closed} if there exist $f_1, \ldots, f_n$, nodes of $T$ with non-empty label that satisfy the following conditions:
\begin{itemize}
\item \label{item_adef} every node of $T$ is either a descendant or an antecedent of one of the node $f_1, \ldots, f_n$;
\item for every $i\neq j$, $f_i$ is not a descendant of $ f_j$.
\end{itemize}

\item Let $T$ be a closed tree and $f_1, \ldots, f_n$ be nodes of $T$ satisfying the properties of \ref{item_adef}.
Let $T'$ be the tree obtained from $T$ by removing the label of all its nodes except $f_1, \ldots, f_n$. We say that $T'$ is a \textbf{skeleton} tree of $T$.

\item Let $T$ be a tree associated with a primitive formula $\psi$. We say that $T'$ is a \textbf{closed subtree} of $T$, if $T'$ is closed and if $T'$ is obtained from $T$ by removing the labels of some of its nodes.
\end{enumerate}
\end{de}

\begin{rem} 
Let $T$ be a closed subtree and  $f_1, \ldots, f_n$ be nodes of $T$ with a non-empty label that satisfy the condition of the definition \ref{closed_tree}. Let $T'$ be a skeleton tree of $T$. Then $\phi(T')$ is a formula equivalent to $\phi(T)$.
\end{rem}

\begin{ex} 
Here is an example of a closed tree:
\Tree[ {$\lbrace r(x)=c, r(x)=l(x)\rbrace$} r(x)=l(x) ].

                  \end{ex}

\begin{lem} \label{yyy} 
Let $\psi(x)$ be a primitive formula obtained as a conjunction of formulas of the form $t(x)=c $ where $t$ is a non-constant term and $c$ is a constant.
Suppose moreover that $\psi(x)$ is closed.
Then $\psi$ defines a singleton.
\end{lem}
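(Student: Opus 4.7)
The idea is to reconstruct the (unique) element satisfying $\psi$ from the constants prescribed at the distinguished nodes $f_1,\ldots,f_n$, leveraging both the injectivity and the surjectivity of the pairing function $(l,r):M\to M^2$.

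I would first set up the combinatorics. Let $T_0$ denote the finite subtree of $T$ consisting of all non-strict ancestors of the $f_i$'s. I claim $T_0$ is a finite complete binary tree whose leaves are exactly $\{f_1,\ldots,f_n\}$: any leaf of $T_0$ must coincide with some $f_i$, for otherwise the child heading towards the relevant $f_i$ would still lie in $T_0$; and any internal node $v\in T_0$ has both of its children in $T_0$, since each such child must, by the closedness condition, be either an ancestor of some $f_j$ (and thus lie in $T_0$) or a descendant of some $f_j$ -- but the latter would force $f_j$ to be a proper descendant of the $f_i$ that $v$ is an ancestor of, contradicting the antichain condition. By the remark following Definition~\ref{closed_tree}, I would then pass to the skeleton tree by erasing every label outside $\{f_1,\ldots,f_n\}$; this yields an equivalent formula in which each labeled node $f_i$ carries only equations of the shape $t_{f_i}(x)=c$, with $t_{f_i}$ the term attached to position $f_i$.

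For uniqueness, each $f_i$ carries at least one equation $t_{f_i}(x)=c_i$, so any $x\models\psi$ forces $x_{f_i}=c_i$ for every $i$. Climbing $T_0$ from the leaves toward the root, at each internal node $v$ the identity $(l(x_v),r(x_v))=(x_{v_l},x_{v_r})$, together with the injectivity of $(l,r)$, determines $x_v$ from $x_{v_l}$ and $x_{v_r}$; applied at the root, this pins down $x=x_\varnothing$ uniquely.

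For existence, I would reverse the induction and invoke surjectivity of $(l,r)$: first set $x_{f_i}:=c_i$ (compatibility of the equations at each $f_i$ is the only case in which the lemma is non-vacuous, so assume it), and then, for each internal $v\in T_0$ processed from the leaves upward, define $x_v:=(l,r)^{-1}(x_{v_l},x_{v_r})$. The element $x:=x_\varnothing$ thus constructed satisfies $l(x_v)=x_{v_l}$ and $r(x_v)=x_{v_r}$ at every internal node of $T_0$, so evaluating the term attached to position $f_i$ at $x$ returns $c_i$; hence $x\models\psi$. The step I expect to be most delicate is the combinatorial claim that $T_0$ is complete together with the justification that passing to the skeleton tree is legitimate -- once these are in hand, the existence/uniqueness argument reduces to a clean double induction powered by the bijectivity of $(l,r)$.
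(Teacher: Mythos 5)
Your argument is correct and runs on the same engine as the paper's proof---the bijectivity of $(l,r)$ propagated node by node through the tree---but it is organized along a genuinely different route. The paper argues by induction on the depth of $\psi$: it asserts that the root of $T(\psi)$ carries the empty label, rewrites $\psi(x)$ as $\phi(T_l)(l(x))\wedge\phi(T_r)(r(x))$, applies the induction hypothesis to the two closed subformulas, and concludes by bijectivity of the pairing. You avoid any induction on formula depth: you extract the finite subtree $T_0$ spanned by the antichain $f_1,\ldots,f_n$, prove it is a complete binary tree whose leaves are exactly the $f_i$ (this is the combinatorial content of closedness that the paper leaves implicit), and then separate uniqueness (leaves-to-root determination via injectivity of $(l,r)$) from existence (leaves-to-root construction via surjectivity, after reducing to the skeleton). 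What your version buys is explicitness: the completeness of $T_0$ is precisely what makes any such recursion legitimate, and you do not need the paper's claim that the root is unlabeled, which is not literally forced by closedness. One caveat, which affects the paper's proof equally: as stated the lemma tacitly assumes the prescribed constants are consistent (a node labeled by $t(x)=c$ and $t(x)=c'$ with $c\neq c'$, or a labeled strict descendant of some $f_i$ whose constant disagrees with the value forced by $c_i$, makes $\psi$ define the empty set rather than a singleton); you flag the first of these and, like the paper, delegate the second to the skeleton-equivalence remark following Definition~\ref{closed_tree}, which is acceptable within the paper's conventions.
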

\begin{proof} 
This is an immediate consequence of the fact that $\Theta$ is a
bijection and it can be proved by recurrence on the depth of
$\psi$,  $ q $. \\
For $ q=1 $, $\psi(x)$ is of the form $(l(x)=c)\wedge (r(x)=c')$ where $c, c'\in M$. Since $\Theta$ is a bijection, $\psi$  defines a singleton.

\medskip
\noindent
Suppose that the property is satisfied for $q \in \mathbb{N}^*$. Let us show
that it is true for $ q+1$.
\\
Let $T$ be the tree associated with $\psi$.
\\
Since $q+1$ is strictly greater than $ 0 $, and $T$ is closed, its root is labelled by the empty list, and
$\psi(x)$ is equivalent to $\phi(T_l) (l(x)) \wedge \phi(T_r) (r(x))$.
\\
The formulas
$\phi(T_l)$ and $\phi(T_r)$ are also closed and, by hypothesis of induction, they define a singleton. Hence $\phi(T)$ also defines a singleton.
\end{proof}

\begin{lem}\label{closed} 
Let $\psi(x)$ be a primitive formula that defines a finite and non-empty set. Then:
\begin{enumerate}[label=(\alph*)]
\item \label{item_alem} The tree associated to $\psi$ admits a closed subtree.
\item  \label{item_blem} The formula $\psi(x)$ is equivalent to a primitive positive \frml and $\psi$ defines in fact a singleton. 
More precisely,
let $T$ be a skeleton of $T(\psi)$ (which is defined as $T(\psi)$ is closed) and $T'$ be the tree obtained from $T$ by removing from the labels of $T$ every formula that is an inequality.
Then $\psi(x)$ is equivalent to $\phi(T')(x)$.
\end{enumerate}
\end{lem}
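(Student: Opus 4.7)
The plan is to treat the two assertions separately, with part~(b) reducing, via the skeleton construction, to a claim that $\phi(T')$ defines a singleton.

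\textbf{Part~(a), by contrapositive.} I will show that if $T(\psi)$ admits no closed subtree, then $\psi$ is empty or infinite, contradicting the hypothesis. Let $q$ be the depth of $\psi$, so every labeled node of $T(\psi)$ has depth $\leq q$. Consider the antichain $A$ of nodes at depth exactly $q+1$. If every $v\in A$ had a labeled ancestor then every infinite branch would meet a labeled node, and picking the shallowest labeled node on each branch would give an antichain of labeled nodes covering all branches, i.e.\ a closed subtree. So some $w\in A$ has no labeled ancestor; since labeled nodes have depth $\leq q$, the node $w$ is also unlabeled and has no labeled descendant. Consequently every labeled node is incomparable with $w$, so their subtrees are disjoint. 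The iterated pairing map $\sigma:M\to M^A$, $x\mapsto(x_v)_{v\in A}$, is a bijection, and because no term of $\psi$ corresponds to a node in the subtree rooted at $w$, the value $x_w$ does not occur in any conjunct of $\psi$. Given any $a\in\psi(M)$ and any $m\in M$, modifying $\sigma(a)$ only in its $w$-coordinate yields another element of $\psi(M)$. Since $M$ must be infinite (a pairing bijection without cycles forces $|M|\neq 1$), $\psi(M)$ is therefore empty or infinite.

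\textbf{Part~(b), reduction to a singleton claim.} By (a), $T(\psi)$ is closed; fix a skeleton $T$ with distinguished antichain $f_1,\ldots,f_n$ of labeled nodes, and let $T'$ be obtained from $T$ by dropping every inequality from every label. Removing conjuncts only enlarges the defined set, so $\psi=\phi(T(\psi))\subseteq\phi(T)\subseteq\phi(T')$. Since $\psi\neq\emptyset$, it suffices to show that $\phi(T')$ defines a singleton: this gives simultaneously the equivalence $\psi\equiv\phi(T')$ and the fact that $\psi$ is itself a singleton.

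\textbf{The singleton claim and the main obstacle.} I form the \emph{equality graph} $G$ of $\phi(T')$, with vertices the terms and constants appearing in $\phi(T')$ and edges the equalities. The central claim is that every connected component of $G$ that contains some $x_{f_i}$ also contains a constant. Granting this, each $x_{f_i}$ is pinned by $\phi(T')$ to the unique constant of its component, so $\phi(T')$ implies a formula $\bigwedge_i (x_{f_i}=c_i)$ with closed tree and only constant equalities, to which Lemma~\ref{yyy} applies, giving a singleton $\{a\}$; the non-empty inclusion $\psi\subseteq\phi(T')$ then forces $\phi(T')=\{a\}$. The main obstacle is to rule out a \emph{free} component, i.e.\ one containing some $x_{f_i}$ but no constant. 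If such $E$ existed, parametrizing the common value of its terms by $c\in M$ and freezing the remaining coordinates to those of a fixed $a\in\psi$ would, via the bijection of part~(a), yield an $M$-family $\{x_c\}_{c\in M}\subseteq\phi(T')$. Each of the finitely many inequalities of $T$ excludes the subfamily defined by the corresponding equality; that subfamily is itself defined by a positive primitive formula on $c$, and by an argument parallel to~(a) it is either all of $\phi(T')$ --- forcing $\psi=\emptyset$, contradiction --- or leaves infinitely many admissible $c$. Finitely many such exclusions therefore still leave infinitely many $x_c\in\psi$, contradicting the finiteness of $\psi$. Making this cutting argument on $c$ precise is the principal technical step.
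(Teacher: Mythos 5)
Your part (a) is correct and is essentially the paper's own argument (exhibit a node incomparable with every labelled node and vary the corresponding coordinate through the iterated pairing bijection), and your reduction in part (b) — $\psi(M)\subseteq \phi(T)(M)\subseteq \phi(T')(M)$ because dropping conjuncts only enlarges the defined set, so it suffices to prove that $\phi(T')$ defines a singleton — is sound and in fact cleaner than the paper's label-by-label equivalence. The gap lies in the singleton claim itself: your central claim, that every connected component of the equality graph containing some $x_{f_i}$ must contain a constant, is false. Take $\psi(x):=\bigl(l(x)=r(r(x))\bigr)\wedge\bigl(r(x)=c\bigr)$. It defines the singleton of those $x$ with $l(x)=r(c)$ and $r(x)=c$, its tree is closed with skeleton nodes $f_1=l$ and $f_2=r$, and $T'=T$ since there are no inequalities; yet the component of $x_l$ is $\lbrace x_l, x_{rr}\rbrace$ and contains no constant. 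Here $x_l$ is pinned down not by a chain of equalities ending at a constant, but through the filiation structure: $x_{rr}=r(x_r)=r(c)$ because $x_{rr}$ is a descendant of the node $r$, which a different component fixes to $c$.

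For the same reason your treatment of a ``free'' component does not go through: in the example, setting $x_l=x_{rr}=c'$ and freezing the remaining depth-two coordinate $x_{rl}$ to that of the unique point of $\psi$ forces $x_r$ to be the element whose children are $(x_{rl},c')$, and this equals the required constant $c$ for at most one value of $c'$; so the family $\lbrace x_{c'}\rbrace_{c'\in M}$ is not contained in $\phi(T')$ at all. The parametrization fails whenever a member of the component is a descendant (or ancestor) of a term constrained elsewhere, which is exactly the case your dichotomy overlooks. What is needed is a notion of ``determined term'' that propagates both along equality edges and along the tree structure (a term is determined as soon as an ancestor is, and as soon as all the relevant coordinates below it are), showing that either every skeleton term is forced to a definite value — after which one concludes via Lemma \ref{yyy} as you intend — or some coordinate genuinely escapes all constraints, in which case one produces infinitely many solutions as in part (a). This is the content of the paper's final step, and it is precisely the step you flag as still to be made precise; since the syntactic dichotomy you propose to base it on is incorrect as stated, the proof is not complete.
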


\begin{proof} 
\ref{item_alem} Let assume that $T(\psi)$ does not admit a closed subtree. 
Then there exists a node of $T(\psi)$ that is neither a descendant nor a ascendent of any node of $T(\psi)$ that is labelled by the empty list. Therefore this node is totally independent of $\psi$.
\\
Let $i$ be a finite sequence of elements of $\lbrace l, r\rbrace$ such that this node corresponds to the term $x_i$.
Let $x\in \psi(M)$ and $z\in M$. Consider the\elmt $y$ of $M$ such that for any term $y_k$ that has not a filiation link with $x_i$, $y_k=x_k$ and such that $y_i=z$. This element satisfies the formula $\psi$ since $x$ does. As this is true for any element $z$, $\psi(M)$ is actually infinite. This contradiction proves that $T(\psi)$ admits a closed subtree.

\medskip
\noindent
\ref{item_blem} Let $T$ be a skeleton tree of $T(\psi)$. Let us suppose, ad absurdio, that the formula associated with $T$ is not positive. 
Let $T'$ be the tree obtained from $T$ by removing from the labels of $T$ every formula that is an inequality.

Two cases are possible: either one of the labels of $T$ that were not empty is now empty in $T'$, or none of the labels of $T'$ that were not empty is now empty.

Let us first show that every non-empty label of $T$ that contains an inequality, also contains an equality. 
Let $t_1, \ldots, t_n$ be the terms corresponding to the nodes of $T$ with non-empty label. Assume $t_n$ corresponds to a node whose label contains only inequalities. For every $x\in M$, let $S_x$ be the set of elements of $M$ that satisfy this conjunction of inequalities imposed on $t_n(x)$. As $M$ is infinite, so is $S$.
It contredicts that $\psi(M)$ is finite.

Now, let $n$ be a node of $T$ whose label, $L_n$, isn't empty. By what precceds it contains an equality. It is easy to see that $T_n$ the tree obtained from $T$ by removing any inequality from $L_n$ is associated with a formula equivalent to $\psi$.
By doing this for all non-empty label, we obtain $T'$ a tree such that $\phi(T')(x)$ is primitive positive and such that $\psi(x)$ is equivalent to $\phi(T')(x)$.

Let us now prove that $\phi(T')$ defines a singleton. Assume it does not. Then by lemma \ref{yyy}, $\phi(T')$ is not equivalent to a conjunction of equalities between terms and constants. Hence, there exists $I$ a set of finite sequence of $\lbrace l, r\rbrace$ such that 
\begin{itemize}
\item for any $i, j\in I$, any $x\in \phi(T')(x)$, $x_i=x_j$,
\item for any constant $c\in M$, there exists $x\in \phi(T')$ such that for any $i\in I$, $x_i\neq c$.
\end{itemize}
It is hence obvious that $\phi(T')$ defines an infinite set which contradicts our hypothesis.
\end{proof}

\begin{de} 
A primitive formula such that there is no filiation links between the terms that appear in it is called \textbf{extended}.

If $\psi(x,y_1,\ldots, y_n)$ is a primitive formula, then we say it is \textbf{extended in $x$}, if for any constants $c_1,\ldots, c_n$, the formula in one variable, $\psi(x, c_1,\ldots, c_n)$, is extended.
\end{de}

\begin{rem}
A closed formula is extended.
\end{rem}

\begin{lem} \label{equivalent_extended}
Let $F(x,y)$ be a satisfiable primitive formula in $y$.
Then there exists $G(x,y)$ such that $G(x,y)$ is extended in $x$ and $G(x,y)$ is equivalent to $F(x,y)$.
\end{lem}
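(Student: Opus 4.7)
The plan is to argue by induction on a complexity measure of $F$: for instance, the number of unordered pairs of distinct $x$-terms appearing in $F$ that are in filiation, i.e.\ such that one is an ancestor of the other in the term tree of $x$. When this count is zero, $F$ is already extended in $x$ and we may take $G = F$.

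For the inductive step, I would pick a filiation pair with ancestor $t(x)$ as shallow as possible, and a descendant $t'(x) = w(t(x))$ also appearing in $F$. The three main tools for the rewriting are: the bijectivity of $\Theta = (l,r)$, which lets me rewrite any equality $u = v$ between terms as $l(u) = l(v) \wedge r(u) = r(v)$ and thus push equalities deeper in the tree; the no-cycles hypothesis, which makes every inequality of the form $t(x) \neq w'(t(x))$ with $w'$ a non-empty composition of $l,r$ a tautology, so such inequalities may simply be discarded; and the satisfiability of $F$, which rules out inconsistent simplifications and guarantees that the constraints we keep are not vacuously false.

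I would then examine the atomic formulas of $F$ involving $t(x)$ and split into cases. If $t(x)$ appears in an equality $t(x) = s$ with $s$ a term in $y$ or a constant, I use it to substitute: for every descendant $w'(t(x))$ of $t$ appearing in $F$ (in particular $t'(x)$), I replace $w'(t(x))$ by $w'(s)$, which no longer involves $x$. This eliminates the $x$-descendants of $t$ from $F$ and resolves the filiation. If $t(x)$ appears in an equality with another $x$-term not in the subtree of $t$, bijectivity lets me push that equality deeper until its label is no longer an ancestor of $t'$. Inequalities between $t(x)$ and other $x$-terms that are in filiation with $t$ are dropped as tautologies via the no-cycles assumption.

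The main obstacle is the case where $t(x)$ appears only in inequalities of the form $t(x) \neq s$ with $s$ a term in $y$ or a constant, because the direct bijective rewriting gives only the disjunction $l(t(x)) \neq l(s) \vee r(t(x)) \neq r(s)$, which is not a primitive formula. To keep the rewriting primitive, I would exploit the atomic formulas of $F$ that already constrain the children $l(t(x))$ and $r(t(x))$, either directly or through the descendant $t'(x) = w(t(x))$ and its neighbours in the tree. The satisfiability assumption is used here to show that one of the two disjuncts is forced to be redundant by the remaining constraints, so that $t(x) \neq s$ can be replaced by a primitive inequality at depth one deeper, involving only siblings or cousins of $t'(x)$ rather than $t(x)$ itself. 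Iterating this procedure strictly decreases the filiation count, and at termination one obtains a primitive formula $G$ extended in $x$ and equivalent to $F$.
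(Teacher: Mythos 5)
Your induction and most of its steps coincide with the paper's rewriting: the paper resolves a filiation pair by replacing every equality on the ancestor term $t'(x)$ by the conjunction of the induced equalities on the $2^k$ terms $\tilde{t}\circ t'(x)$ at the depth of the descendant, which is exactly your use of the bijectivity of $(l,r)$; your substitution through an equality $t(x)=s$ and your discarding of inequalities $t(x)\neq w'(t(x))$ via the no-cycle axiom are harmless variants. The genuine gap is the step you yourself single out as the main obstacle. The claim that, thanks to satisfiability, one of the two disjuncts produced by an inequality $t(x)\neq s$ is forced to be redundant by the remaining constraints is not proved, and it is false in general. Take $F(x,y):=(x\neq y)\wedge(l(x)=c)$: it is primitive and satisfiable, the only filiation pair is $(x, l(x))$, and pushing $x\neq y$ one level down gives $l(x)\neq l(y)\vee r(x)\neq r(y)$, i.e., modulo the constraint $l(x)=c$, the condition $l(y)\neq c\vee r(x)\neq r(y)$; for some values of $y$ only the first disjunct holds and for others only the second, so neither is redundant and your induction stalls. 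Worse, no local repair is available in this example: any primitive formula equivalent to this $F$ must contain the literal $x\neq y$ (among literals whose solution set contains the graph of $F$, only $x\neq y$ can exclude a diagonal point $(a,a)$ with $l(a)=c$, since a mixed inequality $u(x)\neq v(y)$ containing the graph forces $v$ to be the trivial term and then, by the no-cycle hypothesis, $u$ trivial as well, while mixed equalities cannot contain the graph and pure literals contain those points), and it must also contain a pure-$x$ literal on a proper subterm of $x$, because finitely many literals in the bare variable $x$ cannot cut out the infinite set $\lbrace x : l(x)\neq c\rbrace$. Hence every primitive formula equivalent to $F$ has a filiation link in $x$ and is not extended in $x$.

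So the difficulty you identified is real, but satisfiability alone cannot do the work you ask of it. For comparison, the paper's own argument rewrites only equalities on the ancestor term, for which the conjunction claim is correct, and is silent about inequalities; in the places where the lemma is applied (Lemmas \ref{formidable} and \ref{defsingl}) the formula defines, for each value of the parameter, a singleton fibre, and it is that additional information, not mere satisfiability, that would have to enter the statement or the proof to dispose of the inequalities. As written, your proof has a gap exactly at the disjunction step, and the example above shows that this step cannot be carried out under the hypotheses you use.
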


\begin{proof} Let us assume that $F(x,y)$ is not extended in $x$.
Let $t(x)$ and $t'(x)$ be two terms that appear in $F(x,y)$ with filiation links. Let us for instance assume that $t(x)=t''\circ t'(x)$ where there exist $k\in \N^*$ and $t_1, \ldots, t_k \in \lbrace l, r\rbrace$ such that $t''=t_1\circ\ldots\circ t_k$.
Then any atomic formula on $t'(x)$ is equivalent to a conjunction of atomic formula on the terms $\tilde{t}\circ t'(x)$ where $\tilde{t}$ ranges over the set $S:=\lbrace t'_1\circ\ldots\circ t'_k | t'_1, \ldots, t'_k \in \lbrace l, r\rbrace\rbrace$.

There is no filiation link between $t(x)$ and the terms $\tilde{t}\circ t'(x)$ where $\tilde{t}$ ranges over $S$.

By iterating as many times as necessary, we obtain a formula extended in $x$ and equivalent to $F(x,y)$.
\end{proof}

\begin{lem} \label{defsingl} 
Let $l<n$ be two integers and $\psi(x,y_1,\ldots, y_l, y_{1+1},\ldots, y_{1+n})$ be a formula extended in $x$ of the form $(\bigwedge_{j=1}^l t_j(x)=y_j) \wedge (\bigwedge_{j=1}^{n} t_j (x)\neq y_{1+j})\wedge F(x)$ where $F(x)$ is a conjunction of equalities and inequalities between non-constants terms in $x$.
\\
If there exists $c_1,\ldots, c_l, c'_1,\ldots, c'_n$ such that $\psi(x,c_1,\ldots, c_l, c'_1,\ldots, c'_n)$ 
defines a finite set, then \\
$\psi(x,y_1,\ldots, y_l, y'_1,\ldots, y'_n)$ is 
equivalent to $\wedge_{j=1}^l t_j (x)=y_j$.
\\
Moreover, for every $d_1,\ldots, d_l, d'_1,\ldots, d'_n$,\elmts of $M$, $\psi(x,d_1,\ldots, d_l, d'_1,\ldots, d'_n)$ defines either a singleton or the empty set.
\end{lem}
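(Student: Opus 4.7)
The starting point is to apply Lemma~\ref{closed} to $\psi(x, c_1, \ldots, c_l, c'_1, \ldots, c'_n)$: as a primitive formula defining a finite set (assumed non-empty, the empty case being trivial for the moreover clause), it is a singleton $\{x_0\}$ and is equivalent to its positive primitive part $\psi^+(x, \overline c) := \bigwedge_{j=1}^l t_j(x) = c_j \wedge F^+(x)$, where $F^+$ collects the equalities occurring in $F$.

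The heart of the proof is to show $F^+$ is the empty conjunction. Suppose instead $F^+$ contains a nontrivial equality $s(x) = s'(x)$; by the extended-in-$x$ hypothesis, $s$ and $s'$ are distinct from every $t_j$, and the set of all terms appearing in $\psi$ forms an antichain in the tree of $x$. Using the bijectivity of $(l, r) : M \to M^2$, any prescription of values on such a finite antichain extends to at least one element of $M$. The equalities of $F^+$ only identify certain values at antichain positions with one another, without pinning them to any constant; since $s$ and $s'$ do not lie in $\{t_1,\ldots,t_l\}$, the common value $s(x) = s'(x)$ is free, and choosing it to be any $u \in M$ produces a valid element satisfying $\psi^+(x, \overline c)$. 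This yields infinitely many solutions, contradicting the singleton property; hence $F^+ \equiv \top$. Consequently $\psi^+(x, \overline c) = \bigwedge t_j(x) = c_j$ defines $\{x_0\}$, and Lemma~\ref{closed}(a) applied to it produces a closed subtree whose only labelled nodes sit on $\{t_1,\ldots,t_l\}$; thus this antichain is itself closed.

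By Lemma~\ref{yyy}, for every tuple $(d_1,\ldots,d_l)$ the formula $\bigwedge t_j(x) = d_j$ defines a unique element $x_1 \in M$. Therefore $\psi(x, \overline d, \overline d') \subseteq \{x_1\}$: it is either $\{x_1\}$ (when the remaining inequalities and the inequality part of $F$ are compatible with $x_1$, a condition purely on the parameters) or empty, which is the moreover clause; and when non-empty it coincides with the set cut out by $\bigwedge t_j(x) = y_j$, giving the asserted equivalence. The main obstacle is the step establishing $F^+ \equiv \top$: it rests on the fact that any partial assignment on a finite antichain of terms admits an extension to an element of $M$---a direct consequence of the bijectivity of $(l,r)$---combined with the extended-in-$x$ hypothesis, which guarantees that $s$ and $s'$ are genuinely independent from the $t_j$'s so that their common value is free.
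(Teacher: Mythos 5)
Your argument follows the paper's proof essentially step for step: reduce $\psi(x,\bar c,\bar c')$ to its positive part via Lemma~\ref{closed}, rule out any surviving equality between non-constant terms because values on an antichain of nodes can be chosen freely (this is exactly the paper's ``easy to check'' step, which you spell out), and conclude that $\bigwedge_{j\le l} t_j(x)=c_j$ is closed so that Lemma~\ref{yyy} yields a singleton (or, once the inequalities are reinstated, possibly the empty set) for every choice of parameters. The only caveat --- shared with the paper's own proof --- is the tacit assumption that the terms occurring in $F$ are distinct from the $t_j$'s (extendedness only excludes filiation between distinct terms), without which the reduction to $\bigwedge_{j\le l} t_j(x)=y_j$ is not automatic; since the paper makes the same implicit assumption, your proposal matches its argument in both substance and level of rigor.
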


\begin{proof} 
Let $c_1,\ldots, c_l, c'_1,\ldots, c'_n$ such that $\psi(x,c_1,\ldots, c_l, c'_1,\ldots, c'_n)$ defines a finite set. Let $T$ be the skeleton of $T(\psi(x,c_1,\ldots, c_l, c'_1,\ldots, c'_n))$ and $T''$ be the tree obtained from $T$ by removing off the labels of $T'$ every formula that is an inequality. 
By lemma \ref{closed}, $\psi(x,c_1,\ldots, c_l, c'_1,\ldots, c'_n)$ is equivalent to $\phi(T)(x)$. This formula is obviously extended.
It is easy to check that if this formula isn't a conjunction between terms and constants, it then defines an infinite set.
That implies that $\psi(x,c_1,\ldots, c_l, c'_1,\ldots, c'_n)$ is in fact equivalent to $\wedge_{j=1}^l t_j(y)=c_j$.
\\
Since $\psi(x,c_1,\ldots, c_l, c'_1,\ldots, c'_n)$ defines a finite set, its associated tree admits a closed subtree. Because  $\psi(x,c_1,\ldots, c_l, c'_1,\ldots, c'_n)$ is assumed to be extended, that implies that $\wedge_{j=1}^l t_j(y)=c_j$ is closed. Hence, for every $d_1,\ldots, d_l, d'_1,\ldots, d'_n$,\elmts of $M$, $\psi(x,d_1,\ldots, d_l, d'_1,\ldots, d'_n)$  is 
equivalent to the closed formula $\wedge_{j=1}^l t_j (x)=d_j$, which, by lemma \ref{yyy} defines singleton.
\end{proof}

\section{Form of the definable injections}

\begin{lem}{\textbf{Form of the definable injections}} \label{formidable} 
Let $A$ be a definable subset of $M$ and $ h$ be an injective definable function from $A$ to
$A$.
The graph of $ h$ can be defined by a formula of the form $\vee_j
(\phi_j(x) \wedge \phi'_j(x, y))$ where:
\begin{enumerate}
\item every $\phi_j(x)$ is primitive in $x$,
\item the sets $\phi_j (M)$ are all disjoint,
\item for every $j$, $\phi'_j(x, y)$ is a disjunction of positive primitives \frmlsf.
\end{enumerate}
\end{lem}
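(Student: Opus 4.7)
The plan is to peel atomic content off the graph using quantifier elimination and the preceding lemmas, then to disjointify the $x$-parts by passing to the atoms of a finite Boolean algebra. By quantifier elimination in $\{l,r\}$, the graph of $h$ is defined by a Boolean combination of atomic formulas; I place it in disjunctive normal form $\bigvee_{k=1}^N \psi_k(x,y)$ with each $\psi_k$ primitive. Applying Lemma~\ref{equivalent_extended} in $x$ and, by the symmetric argument, also in $y$, I may assume each $\psi_k$ is extended in both variables. Then I split the conjuncts of every $\psi_k$ into $\alpha_k(x)$ (atoms involving only $x$-terms), $\gamma_k(y)$ (only $y$-terms), and $\beta_k(x,y)$ (the cross conjuncts), and I discard any $k$ for which $\alpha_k(M)=\varnothing$ or for which $\beta_k\wedge\gamma_k$ is unsatisfiable above every $x_0\in\alpha_k(M)$.

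For a surviving $k$, I fix $x_0\in\alpha_k(M)$; then $\beta_k(x_0,y)\wedge\gamma_k(y)$ is a primitive formula in the single variable $y$, extended in $y$, with parameters the values $t_i(x_0)$ of the $x$-terms coming from the cross conjuncts and the constants $c_j$ coming from pure-$y$ atoms $s_j(y)=c_j$ or $s_j(y)\neq c_j$. Because $h$ is a function, this formula has at most one solution, so its solution set is finite. I apply Lemma~\ref{defsingl} with the roles of $x$ and $y$ swapped: the $y_j$'s and $y'_j$'s of that lemma receive the $t_i(x)$ and the $c_j$'s, while the block $F$ collects the pure-$y$ atoms between two non-constant $y$-terms. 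The lemma kills every inequality and the block $F$, so $\beta_k\wedge\gamma_k$ is equivalent to $\pi_k(x,y):=\bigwedge_i s_i(y)=t_i(x)\wedge\bigwedge_j s_j(y)=c_j$, a positive primitive formula; hence $\psi_k\equiv\alpha_k\wedge\pi_k$.

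It remains to disjointify the $x$-parts. Let $\mathcal{T}$ be the finite set of atomic $x$-formulas appearing in some $\alpha_k$ or in a fixed defining formula of $A$; for each sign assignment $\varepsilon:\mathcal{T}\to\{+,-\}$, put $\phi_\varepsilon(x):=\bigwedge_{\varepsilon(\tau)=+}\tau(x)\wedge\bigwedge_{\varepsilon(\tau)=-}\neg\tau(x)$. Each $\phi_\varepsilon$ is primitive, and the sets $\phi_\varepsilon(M)$ are pairwise disjoint with union $M$. Both $A$ and each $\alpha_k$ are unions of corresponding $\phi_\varepsilon(M)$'s, so every $\phi_\varepsilon(M)$ either lies entirely inside $A$ or misses $A$; I drop the latter. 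For a retained $\varepsilon$ I set $\phi'_\varepsilon(x,y):=\bigvee\{\pi_k(x,y):\phi_\varepsilon\models\alpha_k\}$, a disjunction of positive primitive formulas; the graph of $h$ is then $\bigvee_\varepsilon(\phi_\varepsilon(x)\wedge\phi'_\varepsilon(x,y))$, which is the required form. The main obstacle is the middle step, where Lemma~\ref{defsingl} must be invoked in the right formal guise; once the cross and pure-$y$ conjuncts are correctly mapped into its block structure, the collapse to positive primitive form is automatic. Injectivity of $h$, beyond its being a function, is not used.
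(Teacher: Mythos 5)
Your proposal is correct and takes essentially the same route as the paper's proof: quantifier elimination to reduce to primitive pieces, Lemma \ref{equivalent_extended} to make the $y$-part extended, and Lemma \ref{defsingl} (using only that $h$ is a function, so the fibres in $y$ are finite) to collapse each piece to a positive primitive formula, with the $x$-parts made disjoint. The only difference is organisational --- the paper partitions $A$ into disjoint primitive pieces first and then treats each restriction of the graph, while you collapse a global disjunctive normal form and disjointify at the end via the atoms of the Boolean algebra generated by the $x$-atoms --- and the extra details you supply (the satisfiability check before invoking Lemma \ref{defsingl}, the explicit atom construction) are consistent with the paper's argument.
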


\begin{proof} 
By elimination of the quantifiers, we can assume that we have
partitioned $A$ into a disjoint union of sets $ A_j$ defined by formulas
$\phi_j$ such that points 1 and 2 are satisfied. Let $\phi'_j(x, y)$ be a formula that defines the graph of $ h$ restricted to $ A_j$. Up to refining the partition $\cup_i A_i$, we can assume that $\phi'_j(x, y)$ is a disjunction of primitives \frmlsf.
\\
By lemma \ref{equivalent_extended}, since $\phi'_j(x, y)$ is satisfiable, it is equivalent to an
extended formulas in $y$. As for every $x$, the formula in $y$, $\phi'(x,y)$ defines a singleton, lemma \ref{defsingl} implies that there are equivalent to a positive primitive formula. 
\end{proof}

\begin{de} 
Let $h$ be a definable injection from a definable set of $M$ into $M$.
We say that a formula, $\phi(x,y)$ defining the graph of $h$ is \textbf{a normal formula} if it is of the form $F(x)\wedge \psi(x,y)$ where
\begin{itemize}
\item $F(x)$ is a formula defining the set of definition of $h$,
\item $\psi(x,y)$ is a formula extended in $y$, conjunction of equalities between terms in $y$ and terms in $x$ (possibly constant).
\end{itemize}
\end{de}

\begin{lem}\label{prol} 
Let $h$ be an injection definable by a normal formula on a set $A\setminus B$ where $A$ and $B$ are definable subsets of $M$ and $B\subsetneq A$.
Then $h$ can be extended on $A$ by an injection $\tilde{h}$ definable by a normal formula.
\end{lem}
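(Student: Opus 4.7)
My plan is to produce a normal formula $F'(x)\wedge\widetilde\psi(x,y)$, with $F'$ defining $A$, that extends the given defining formula $F(x)\wedge\psi(x,y)$ of $h$. Write $\psi(x,y)=\bigwedge_i t_i(y)=s_i(x)$ with $\{t_i\}$ an antichain in $T_y$ (from the ``extended in $y$'' hypothesis). My first observation is that $\{t_i\}$ must actually be \emph{closed}: otherwise, for every $x$, the set of $y$ satisfying $\psi(x,y)$ would be infinite (the uncovered branches of $T_y$ are free by bijectivity of the pairing), contradicting the fact that $F\wedge\psi$ defines a function on $A\setminus B$. Consequently $\psi$ on its own already defines a total function $\widehat h\colon M\to M$ extending $h$.

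The first attempt is to set $\widetilde\psi:=\psi$, yielding $\widetilde h:=\widehat h|_{A}$ with the normal formula $F'(x)\wedge\psi(x,y)$; if $\widehat h$ happens to be injective on $A$, this is already the extension. Otherwise I will modify $\psi$ on $B$ without disturbing its action on $A\setminus B$, by a two-step procedure. First, I deepen the antichain: every $t_i$ for which $s_i(x)$ is constant (or, more generally, fails to distinguish $A\setminus B$ from $B$) is replaced by its two children $l\circ t_i$ and $r\circ t_i$, producing a strictly finer closed antichain $\{t'_j\}$ and an equivalent formula $\bigwedge_j t'_j(y)=\sigma_j(x)$ whose right-hand sides $\sigma_j(x)$ are the forced subterms of the $s_i(x)$. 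Second, at each new slot I replace $\sigma_j(x)$ by a term $s'_j(x)$ provably equal to $\sigma_j(x)$ on $A\setminus B$, drawn from the equalities in $x$ that $F$ forces there -- these equalities being extracted via Lemmas~\ref{equivalent_extended} and~\ref{defsingl} applied to the primitive part of $F$. The substitutions are chosen so as to separate $\widehat h(B)$ from $h(A\setminus B)$ and keep $\widetilde h$ injective on $B$.

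Setting $\widetilde\psi(x,y):=\bigwedge_j t'_j(y)=s'_j(x)$, the antichain $\{t'_j\}$ remains an antichain, so $\widetilde\psi$ is extended in $y$ and $F'(x)\wedge\widetilde\psi(x,y)$ is a normal formula; closedness of $\{t'_j\}$ ensures that it defines a function on $A$, agreement with $h$ on $A\setminus B$ follows from the choice of the $s'_j$'s, and injectivity on $A$ holds by construction. The main obstacle is the second step of the refinement: showing that $F$ always supplies enough equalities in $x$ to permit substitutions separating $B$ from $A\setminus B$. This rests on the fact that a failure of injectivity of $\widehat h$ on $A$ together with the injectivity of $h$ on $A\setminus B$ forces $F$ to carry precisely the equality information along which $\widehat h$ collapses; the no-cycles hypothesis keeps $T_y$ a genuine infinite tree with unbounded room to deepen antichains, and the primitive-formula analysis developed in the previous sections makes the extraction of the required equalities precise.
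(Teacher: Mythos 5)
Your first paragraph is essentially the paper's own argument: since $A\setminus B\neq\emptyset$, some fibre $\psi(x_0,y)$ is a singleton, so by Lemma \ref{defsingl} (equivalently, closedness of the antichain of $y$-terms) $\psi(x,y)$ defines a singleton in $y$ for \emph{every} $x$, and $G(x)\wedge\psi(x,y)$, with $G$ defining $A$, is a normal formula defining a total extension of $h$ to $A$. Where you part ways with the paper is injectivity: the paper never enters your ``otherwise'' branch. It keeps $\psi$ unchanged and argues that this same formula is already injective, by taking $\psi$ extended in $x$ as well (via Lemma \ref{equivalent_extended}) and applying Lemma \ref{defsingl} a second time with the roles of $x$ and $y$ exchanged, so that for each $y$ the set of $x$ satisfying $\psi(x,y)$ is a singleton or empty. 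No repair of the formula is ever performed.

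The genuine gap is precisely your repair branch. Its central claim --- that $F$ ``always supplies enough equalities in $x$'' to choose the substitutions $s'_j$ separating $\widehat{h}(B)$ from $h(A\setminus B)$ while keeping $\widetilde{h}$ injective --- is exactly the content of the lemma in the hard case, and you give no construction and no proof of it; the sentence asserting that a failure of injectivity of $\widehat{h}$ ``forces $F$ to carry precisely the equality information along which $\widehat{h}$ collapses'' is a hope, not an argument. Note that $F$ only constrains points of $A\setminus B$, whereas the collisions to be removed involve points of $B$, where $F$ imposes nothing; and a normal formula is a single positive conjunction, so it cannot case-split between $B$ and $A\setminus B$: replacing $\sigma_j(x)$ by a term equal to it on $A\setminus B$ only alters $\widehat{h}$ at those points of $B$ where the two terms happen to differ, and nothing in your sketch shows that such replacements can always be chosen to remove the collisions. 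Indeed, at the level of generality at which you argue (using nothing about $A$ and $B$ beyond definability), the claim cannot be established: with $F(x)$ the formula $r(x)=c$ and $\psi$ the formula $y=l(x)$, the map $h$ is a definable bijection from $A\setminus B$ onto $M$, so no choice of substitutions along equalities valid on $A\setminus B$ (in fact no map at all) produces an injective extension to a strictly larger domain. So, as written, the hard case of your proposal assumes what it must prove, and the proposal is not a complete proof of the lemma.
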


\begin{proof} 
Let $\phi(x,y)$ be a normal formula defining $h$.
We can write $\phi(x,y)$ as a conjunction of a formula $F(x)$ defining the set of definition of $h$ and, $\psi(x,y)$, a extended formula, conjunction of formulas of the form:
\begin{itemize}
\item $t(y)=t'(x)$ where $t(y)$ is a term in $y$ and $t'(x)$ a term in $x$,
\item $t(y)=c$ where $t(y)$ is a term in $y$ and $c\in M$.
\end{itemize}
According to lemma \ref{defsingl}, if there exists $x_0\in M$ \st $\psi(x_0,y)$ is a singleton, then for any\elmt $x\in M$, $\psi(x,y)$ defines a singleton. Thus this formula can indeed define the function which to $x\in M$ associates the only\elmt $y$ such that $\psi(x,y)$ is true.
\\
Let $G(x)$ be a formula defining $A$. The last point is to prove that $G(x)\wedge \psi(x,y)$ defines the graph of an injection.
\\
Since the formula, $\psi(x,y)$ is extended in $y$ it is also in $x$. There exists an\elmt $x_0$ (any $x_0\in A\setminus B$) such that the formula in $y$, $\psi(x_0,y)$ defines a singleton. According to lemma \ref{defsingl}, for any $x$, $\psi(x,y)$ defines a singleton: this precisely means that the function $\psi(x,y)$ is injective.
\end{proof}

\section{Simple sets}

Sets defined by positive primitive formulas -that we will later call "simple sets"- have two crucial properties that we prove in this section: 
\begin{itemize}
\item Every definable set is a Boolean combination of "simple sets"; 
\item Simple sets are in definable bijection with a Cartesian power of $M$ (with the convention that $M^0$ is a singleton).
\end{itemize}
These two properties imply that the Grothendieck ring of $M$ is a quotient of $\Z[T]$ where $T:=[M]$ and even of $\Z[T]/(T-T^2)$ since $M$ is in definable bijection with $M^2$.

The proof that $K_0(M)$ is precisely $\Z[T]/(T-T^2)$ will rely on another property of these sets: at fiwed depth, the topology they generate is noetherian. We develop these properties in this section.

\begin{de} 
Let $p\in \N^*$.
Let $\sqcup_{j=1}^n I_j$ be a partition of $\lbrace l, r \rbrace^{p} $ and
let $ C $ be a set of couples $ (i, c)$ where $i\in \lbrace g,
d \rbrace^{p} $
and $c\in M$. \\
We associate to $\sqcup_{j=1}^n I_j$ and $C$, the subset $A$ of $M$ whose\elmts $x$ are such as:
\begin{itemize}
 \item For every $1 \leq j \leq n $, and all $ j_1,
j_2 \in I_j$, $x_{j_1}=x_{j_2} $,
 \item for every $ (i, c) \in C $, $x_i=c $,
\item for every $ i, j $ such that there exists $ k $ with $ i, j \in I_k $, if $ (i, c) \in C $, then $ (j, c) \in C $ ($C$ is maximal).
\end{itemize}
The set $A$ is called \textbf{simple set of depth $p$} (the depth is
not specified when there is no ambiguity).
We call $\sqcup_{j=1}^n I_j$ the \textbf{partition} of $\lbrace l, r \rbrace^{p} $ adapted to $A$ and $ C $ its \textbf{set of constants}.
\end{de}

\begin{rem}\label{rem4} 
\begin{enumerate}
\item The depth of a set is not intrinsic to this set: any simple set of depth $p$ is also a simple set of depth $ q $ for every $ q> p$.
\item The intersection of two simple sets of depth $p$ is a simple set of depth $ p$.
\item If $ B_1, B, _2, B_3 $ are simple sets, not all identical, and of the same depth, then
$ B_1=B_2 \cup B_3 $ implies that $ B_2=\emptyset $ or $ B_3=\emptyset$.
\item If $ B_1 $ is a simple set and $ B_2 \subseteq B_1$ is a simple \ens strictly included in
$ B_1 $ and defined by a primitive positive formula of the same depth as $ B_1 $, then $ B_2$ is a simple set whose
partition refines the one adapted to $ B_1 $ and
whose set of constants contains the one of $ B_1$.
\end{enumerate}
\end{rem}

\begin{de} 
Let $q\in \N^* $ and $\sqcup_{i=1}^m J_i $ be a disjoint union of subsets included in $\lbrace l, r\rbrace^q$.\\
Consider the formula $\wedge_{i=1}^m (\wedge_{s, t \in J_i} x_s=x_t)$ which for every $i$, imposes equality between all $x_k $ for $ k \in J_i$. This formula possibly implies equalities between other terms. Consider the maximal subsets, $ I_i $, of $\lbrace l, r\rbrace^q $ such that for every $ k, l \in I_i $, the equality $x_k=x_l $ is implied by the formula
$\wedge_{i=1}^m  (\wedge_{s, t \in J_i} x_s=x_t)$.
\\
We call \textbf{partition} of $\lbrace l, r\rbrace^q $ \textbf{generated} by $\sqcup_i J_i $, the partition of $\lbrace l, r\rbrace^q $ made up of  $I_i $.

\medskip
\noindent
Let $\sqcup_i K_i $ be a partition of $\lbrace l, r\rbrace^q $ and $C$ a set of pairs $ (j, c)$ where $j \in \lbrace l, r\rbrace^q $ and $c\in M$.
The formula $[\wedge_{i=1}^m (\wedge_{s, t \in J_i} x_s=x_t)] \wedge [\wedge _{(j, c) \in C} x_j=c] $ defines a simple set whose set of constants, $C '$, is called \textbf{set of constants generated} by $C$ and $\sqcup_i K_i$.
\end{de}

\begin{lem} \label{noe} 
Let $p$ be an integer. Consider the topology on $M$ whose closed sets are generated by the simple sets of depth $p$.
Then this topology is Noetherian and the simple sets of depth  $p$ are its 
irreducible closed sets.
\end{lem}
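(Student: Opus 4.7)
I plan to establish two statements: that the irreducible closed sets are exactly the simple sets of depth $p$, and that the topology satisfies the descending chain condition.

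To show each simple set $A$ of depth $p$ is irreducible, I will use a generic-point argument. Writing $A = F_1 \cup F_2$ with the $F_i$ closed, each $F_i$ is a finite union of simple sets of depth $p$, and Remark \ref{rem4}(2) lets me intersect with $A$, reducing to the statement: if $A = B_1 \cup \ldots \cup B_k$ with each $B_j$ a simple subset of $A$ of depth $p$, then $B_j = A$ for some $j$. Assume for contradiction every $B_j$ is strictly contained in $A$; by Remark \ref{rem4}(4) each $B_j$ either strictly coarsens the partition of $A$ or fixes a previously-free class to a constant, so $B_j$ is confined to a ``proper coordinate subvariety'' of the natural parametrization of $A$ by the free-class coordinates. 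I will then pick values for the free classes of $A$ avoiding all the finitely many constants that appear in any $B_j$ and pairwise distinct --- possible since $M$ is infinite and only finitely many constants and coordinates are involved. The resulting element of $A$ violates every extra constraint imposed by each $B_j$, contradicting $A = \bigcup_j B_j$. Conversely, any irreducible closed set is, by definition of the topology, a finite union of simple sets of depth $p$, and irreducibility forces it to equal one of them, so it is itself simple.

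For Noetherianness, I will proceed by induction on $f$, proving the stronger statement that every descending chain of closed subsets of a simple set $A$ of depth $p$ with $f(A) \leq f$ stabilizes. The base case $f=0$ is trivial since $A$ is then a singleton. For the inductive step with $f(A) = k \geq 1$, given a chain $F_1 \supseteq F_2 \supseteq \ldots$ I may assume $F_1 \subsetneq A$ (otherwise the chain is eventually constant equal to $A$, or I shift to the first index where strict containment in $A$ holds). Decomposing $F_1$ irredundantly as $\bigcup_{i=1}^m B_{1,i}$ with each $B_{1,i}$ a proper simple subset of $A$, the crucial inequality $f(B_{1,i}) \leq k-1$ holds for each $i$: passing from $A$ to a proper simple subset either fixes a free class to a constant (dropping $f$ by one), or strictly coarsens the partition while merging a free class with something else (also dropping $f$ by one). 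The inductive hypothesis then applies to each $B_{1,i}$, so each chain $F_n \cap B_{1,i}$ stabilizes, and hence $F_n = \bigcup_i (F_n \cap B_{1,i})$ stabilizes. Applying this to $A = M$, which is simple of depth $p$ with $f(M) = 2^p$, completes the proof.

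The main delicate point is verifying $f(B_{1,i}) \leq k-1$ in all cases. The only apparent exception is the merge of two classes already fixed to the \emph{same} constant, which would leave $f$ unchanged; however that operation produces a simple set with the same defining set of constraints, so in fact no strict inclusion arises this way. Modulo this case-analysis on Remark \ref{rem4}(4), the two parts of the proof rely only on the infinitude of $M$, the remarks already collected, and straightforward induction.
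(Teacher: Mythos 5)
Your argument is correct, but it is organized rather differently from the paper's, and in fact it is more complete. The paper's proof is short: it checks the closed-set axioms (finite unions, $M$ itself, stability under intersection via Remark \ref{rem4}(2)), disposes of irreducibility by simply invoking Remark \ref{rem4}(3) (which is stated only for a union of \emph{two} simple sets), and proves the chain condition only for chains of \emph{simple} sets, by observing that a strict inclusion forces either a strictly coarser partition or a strictly larger constant set, both of which are bounded (at most $2^p$ constants, finitely many partitions); the passage from ``no infinite chains of simple sets'' to ``no infinite chains of arbitrary closed sets'' is left implicit. You instead prove irreducibility directly by a generic-point argument against a union of arbitrarily many proper simple subsets (which is what is really needed once closed sets are finite unions), include the easy converse that irreducible closed sets are simple, and obtain the descending chain condition for \emph{all} closed sets by induction on the invariant $f(A)$, the number of partition classes of $A$ not pinned to a constant, using that a proper simple subset always drops this invariant (your ``delicate point'' about merging two classes fixed to the same constant is exactly the case one must exclude, and you exclude it correctly; merging classes fixed to distinct constants gives the empty set, which you should note is simply discarded from an irredundant decomposition). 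Two small expository requests if you write this up: define $f$ explicitly (you use it before saying what it is -- the reader must infer it from $f(M)=2^p$ and the base case), and replace ``violates every extra constraint imposed by each $B_j$'' by ``lies in no $B_j$, since any constraint of $B_j$ not implied by those of $A$ is violated'', which is the statement your case analysis actually proves. With those adjustments your proof stands on its own and arguably patches the two places where the paper's proof leans on unproved remarks.
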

\begin{proof} 
Let $F$ be the set of finite unions of simple set of depth $ p$.
\\
Let us check that the axioms defining closed sets are
satisfied by the elements of $ F$.
\\
$F$ is clearly stable by finite union.
\\
The set $M$ is the simple set of depth $p$ whose
set of constants is empty and whose partition only includes singletons. 
thus an element of $ F$.
\\
It is obvious that $F$ is stable by intersection.
\\
The fact that the simple sets are irreducibility closed sets of this topology is exactly the point 3 of the remark \ref{rem4}.
Let us now prove the Noetheriennity. Two simple sets are included one
in the other if, and only if:
\begin{itemize}
\item either the partition of the smallest refines the one of the largest,
\item or the set of
constants of the smallest contains the one of the largest.
\end{itemize}

Since a set of constants has at most $2^p$ elements and since there is a finite number of possible partitions, it cannot
exist infinite chains of simple sets.
\end{proof}

\begin{de}
Let $p\in\N$.
The topology whose sets of closed sets is generated by the simple sets of depth $p$ is denoted \textbf{$\mathcal{Top_p}$}.
\end{de}

\begin{lem} 
Every non-empty simple set is in definable bijection either with $M$ or a singleton.
\end{lem}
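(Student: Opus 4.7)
The plan is to leverage the iterated pairing bijection. Since $\Theta = (l,r)\colon M \to M^2$ is a definable bijection, iterating it yields, for every $p \in \N$, a definable bijection $\Theta_p\colon M \to M^{2^p}$ sending $x$ to the tuple $(x_i)_{i \in \lbrace l, r \rbrace^p}$ of all depth-$p$ terms evaluated at $x$. Under $\Theta_p$, a simple set $A$ of depth $p$ with adapted partition $\sqcup_{j=1}^n I_j$ and set of constants $C$ corresponds to the subset $A'$ of $M^{2^p}$ consisting of tuples $(y_i)$ such that $y_{i} = y_{i'}$ whenever $i, i'$ lie in a common block $I_j$, and $y_i = c$ whenever $(i,c) \in C$.

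Next I would separate the blocks into two kinds. By the maximality clause (the third bullet) in the definition of a simple set, each block $I_j$ either contains no pair from $C$ (and its common value on $A'$ is then unconstrained), or forces the common value on $I_j$ to be a single fixed constant $c_j$. Let $k$ be the number of constrained blocks, so that $n - k$ blocks remain free. Picking a representative index $i_j$ in each free block, the function $\pi\colon A \to M^{n-k}$ given by $\pi(x) = (x_{i_j})_{j \text{ free}}$ is definable, and it admits a definable inverse: given $(z_1,\ldots,z_{n-k})$, build the unique tuple in $A'$ by copying $z_j$ on each free block and $c_j$ on each constrained block, and then apply $\Theta_p^{-1}$.

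If $n - k = 0$, every coordinate is fixed to a constant and $A$ is a singleton (non-emptiness of $A$ rules out any inconsistency among the constants). Otherwise $n - k \geq 1$ and $A$ is in definable bijection with $M^{n-k}$; iterating $\Theta^{-1}\colon M^2 \to M$ (pairing two coordinates at a time into one) gives a definable bijection $M^{n-k} \to M$, and composing finishes the proof. The only step that requires real care is the identification of free versus constrained blocks and verifying that the inverse map lands in $A'$; this is exactly where the maximality clause in the definition of a simple set plays its role, guaranteeing that no hidden equality forces two distinct constants onto a single block.
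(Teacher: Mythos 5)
Your proposal is correct and follows essentially the same route as the paper: identify the blocks of the adapted partition whose value is not forced by the constant set, conclude that $A$ is in definable bijection with $M^{k}$ for $k$ the number of such free blocks (a singleton when $k=0$), and then collapse $M^{k}$ to $M$ using the pairing bijection. Your version is somewhat more explicit about the role of $\Theta_p$ and the maximality clause, but the underlying argument is the one in the paper.
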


\begin{proof} 
Let $A$ be a non-empty simple set of $M$ of depth $p$, of partition $\sqcup_j I_j$, and of set of constants $C$.
\\
Let us consider the maximal subpartition $\sqcup_{i=1}^k I'_i$ of $\sqcup_j I_j$ such that for every $i$, the value of the terms corresponding to $I'_k$ are not determined by the constant set.
\\
If this maximal subpartition is empty then $A$ is reduced to a singleton.
\\
Otherwise, it is obvious that for every $a_1,\ldots, a_k\in M$, there is a unique element $x$ of $M$ such that 
\begin{itemize}
\item
for every $1\leq j\leq k$ and every $i\in I'_k$, $x_j=a_j$,
\item all the other terms of depth less or equal to $p$ have their value implied by $C$. 
\end{itemize}
The set
$A$ is hence in definable bijection with $M^k$. Since $M^2$ is in definable bijection with $M$, any Cartesian power of $M$ is in definable bijection with $M$ and so is $A$.
\end{proof}

\section{Decomposition of definable sets}

\begin{lem}\label{Unidec} 
Let $A$ be a set of depth less than $ p$.
Then $A$ is a Boolean combination of simple sets of
depth $ p$.
\end{lem}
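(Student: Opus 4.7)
The plan is to start from quantifier elimination: any defining formula for $A$ can be written as a Boolean combination of atomic formulas of the form $t(x)=t'(x)$ or $t(x)=c$ together with their negations, and since $A$ has depth at most $p$ we may further assume that every term appearing corresponds to a node at depth $\leq p$ in $T_x$. The goal is then to rewrite each such atomic formula as a positive primitive formula whose terms all lie at depth exactly $p$, because such a formula, after the canonical saturation described in Remark~\ref{rem4}, defines a simple set of depth $p$.

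The key tool is the bijectivity of $(l,r)\colon M\to M^2$, which yields the equivalence
\[
u(x)=v(x)\iff \bigwedge_{s\in\lbrace l,r\rbrace^{p-d}} s(u(x))=s(v(x))
\]
for any terms $u,v$ of depth $d\leq p$, obtained by iterating the elementary step $u=v\Leftrightarrow l(u)=l(v)\wedge r(u)=r(v)$. The same identity applies to $u(x)=c$, each $s(c)$ being again a constant of $M$. Dually, an inequality $u(x)\neq v(x)$ at depth $d$ becomes a disjunction $\bigvee_s s(u(x))\neq s(v(x))$ of atomic inequalities at depth $p$. Every literal of the initial Boolean combination is thus replaced by a Boolean combination of atomic formulas at depth exactly $p$, and, distributing conjunction over disjunction, $A$ is exhibited as a Boolean combination of positive primitive formulas of depth $p$ and their negations.

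It remains to identify each such positive primitive conjunction at depth $p$ as the defining formula of a simple set of depth $p$. A conjunction of equalities $s_1(x)=s_2(x)$ and $s(x)=c$ with $s_1,s_2,s\in\lbrace l,r\rbrace^p$ generates, by transitive closure, a partition of $\lbrace l,r\rbrace^p$ together with a set of constant assignments satisfying the maximality condition in the definition of a simple set. This bookkeeping step is the only point requiring care, but it is essentially combinatorial and causes no real difficulty: one simply closes the imposed relations under equivalence and propagates constants across equivalence classes. Combining the three steps yields the desired decomposition of $A$ as a Boolean combination of simple sets of depth $p$.
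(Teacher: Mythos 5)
Your reduction to level $p$ only works for literals whose two sides are terms of the same depth, or a term and a constant. The equivalence $u(x)=v(x)\iff\bigwedge_{s\in\lbrace l,r\rbrace^{p-d}}s(u(x))=s(v(x))$ applies the same word $s$ to both sides, so it equalizes depths only when $u$ and $v$ already share a common depth $d$. Quantifier elimination, however, also produces atomic formulas comparing terms of different depths, e.g.\ $l(x)=r(r(x))$, and for these the push-down preserves the depth difference: after $k$ steps you have equalities between terms of depths $1+k$ and $2+k$, never a conjunction of equalities between terms all lying at depth $p$. This is not a bookkeeping issue that the final step can absorb. With the paper's definition of a simple set of depth $p$ (equalities among the $2^p$ terms $x_i$, $i\in\lbrace l,r\rbrace^{p}$, and equalities $x_i=c$), membership of $x$ in any Boolean combination of such sets depends only on the equality type of the tuple $(x_i)_{i\in\lbrace l,r\rbrace^{p}}$ over the finitely many parameters involved; but $l(x)=r(r(x))$ translates on that tuple into the conditions $l(x_{rru})=x_{lul}$ and $r(x_{rru})=x_{lur}$ for all $u\in\lbrace l,r\rbrace^{p-2}$, which use the functions $l,r$ and not only equalities. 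Since $x\mapsto(x_i)_{i\in\lbrace l,r\rbrace^{p}}$ is a bijection onto $M^{2^p}$, one can choose a point of $\lbrace x: l(x)=r(r(x))\rbrace$ (a consistent, infinite set) whose level-$p$ subterms are pairwise distinct and avoid the parameters, and then replace the single coordinate indexed by $rr$ followed by $p-2$ letters $l$ with a fresh element: the equality type is unchanged, yet the new point leaves the set. Hence that set is not a Boolean combination of simple sets of depth $p$ for any $p$, and your argument cannot be completed as written.

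The paper's own proof is the single sentence ``this is clear by elimination of quantifiers,'' so you are supplying precisely the detail that is not clear, and your attempt exposes where it breaks: the statement is sound only if ``simple set'' is read more liberally than the formal definition of the paper — for instance as a set defined by an arbitrary positive primitive (extended) formula, where the equated terms may sit on an antichain of nodes of $T_x$ rather than all on level $p$; that is the reading suggested in the introduction and what Lemma \ref{equivalent_extended} naturally produces. Either adopt such a notion (and then recheck the later statements that use the flat form, such as Lemma \ref{noe} and the bijection of a simple set with $M$ or a singleton), or add to your proof a genuine argument reducing cross-level equalities such as $l(x)=r(r(x))$ to equalities at a single level — which, by the observation above, is impossible. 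As it stands, the mixed-depth case is a real gap in your proposal, and in the lemma's one-line proof as well.
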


\begin{proof} 
This is clear by elimination of quantifiers.
\end{proof}

\begin{de} 
Let $p\in \N$.
Let $A$ be a definable set and $\sqcup_i (B_i \setminus C_i)$ a partition of $A$ such that:
\begin{itemize}
\item for every $i$, $ B_i $ is a simple set of depth $p$,
\item for every $i$, $ C_i \subsetneq B_i $ is a union of simple sets of depth $ p$.
\end{itemize}
Then we say that $\sqcup_i (B_i \setminus C_i)$ is a \textbf{decomposition} of $A$ into simple sets of depth $ p$.
The sets $B_i$ are called \textbf{positive sets}, and the sets $C_i$, \textbf{negative sets}.
The sets $B_i \setminus  C_i$ are called \textbf{elementary sets}.
\end{de}

\begin{lem}\label{rrrrrr2} 
Let $p\in \N$ and $A\subseteq M^n$ be a definable set and $ A=\sqcup_{i=1}^n (B_i \setminus C_i)$ a decomposition in simple sets of depth $ p$.
Let us place ourselves in $\mathcal{Top}_p$. There exist $n'\in \N$ and for every $1+n\leq i\leq n+n'$, $B_i$, an irreductible closed set and $C'_i$, a closed set such that 
$ A=\sqcup_{i=1}^{n+n'} (B_i \setminus C_i)$ is a decomposition in simple set (of depth $p$) that satisfies that for every $1\leq i\leq n+n'$ there exist $1\leq i_1, \ldots, i_k \leq n+n'$ such that  $\cup_{m=1}^k B_{i_m}$ is the irreductibles decomposition of the closure of $C_i \cap A$.
\end{lem}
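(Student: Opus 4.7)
The plan is to iteratively enlarge the collection of simple sets appearing in the decomposition until the closures of the negative parts become expressible in terms of the positive parts. I would first collect the finite set $\mathcal{S}$ of all simple sets of depth $p$ occurring in the given decomposition (namely the $B_i$'s together with the simple sets in the union defining each $C_i$). Closing $\mathcal{S}$ under pairwise intersection, using item~(2) of Remark~\ref{rem4}, produces a finite collection $\mathcal{L}$ of simple sets of depth~$p$: finiteness follows from the combinatorial fact that simple sets of depth~$p$ whose constants are drawn from a fixed finite set form a finite family (parametrized by a partition of $\lbrace l,r\rbrace^p$ and a choice of constants from that finite set).

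Next, for each $B \in \mathcal{L}$ I would set $\widetilde{C}_B := \bigcup \lbrace B' \in \mathcal{L} : B' \subsetneq B \rbrace$ and argue that $B \setminus \widetilde{C}_B$ is an atom of the Boolean algebra generated by $\mathcal{L}$. The key observation is that for any $B' \in \mathcal{L}$ not containing $B$, the intersection $B \cap B'$ is either empty or, by closure of $\mathcal{L}$ under intersection, an element of $\mathcal{L}$ strictly smaller than $B$ and hence contained in $\widetilde{C}_B$. Since $A$ lies in the Boolean algebra generated by $\mathcal{S} \subseteq \mathcal{L}$ by the given decomposition, each atom $B \setminus \widetilde{C}_B$ is either entirely contained in $A$ or entirely disjoint from it, yielding the refined partition
\[ A = \bigsqcup \lbrace B \setminus \widetilde{C}_B : B \in \mathcal{L},\ \emptyset \neq B \setminus \widetilde{C}_B \subseteq A \rbrace. \]
By construction this includes each $B_i$ for $1 \leq i \leq n$ as a positive set (with negative part now taken to be $\widetilde{C}_{B_i}$, still a union of simple sets of depth~$p$), and the remaining elements of $\mathcal{L}$ contributing a non-empty atom in $A$ play the role of $B_{n+1}, \ldots, B_{n+n'}$.

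To verify the condition on closures, for any $B \in \mathcal{L}$ appearing in the decomposition I would note that $\widetilde{C}_B \cap A$ is the finite disjoint union of those atoms $B' \setminus \widetilde{C}_{B'}$ with $B' \in \mathcal{L}$, $B' \subsetneq B$, and $B' \setminus \widetilde{C}_{B'} \subseteq A$. Each such non-empty atom is a non-empty open subset of the irreducible closed set $B'$ in $\mathcal{Top}_p$, so its closure equals $B'$ itself. Thus $\overline{\widetilde{C}_B \cap A}$ is a finite union of simple sets already belonging to the refined decomposition, and by Noetherianity of $\mathcal{Top}_p$ (Lemma~\ref{noe}) its irreducible decomposition is obtained by retaining only the maximal such $B'$'s.

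The main conceptual step is the identification of $B \setminus \widetilde{C}_B$ as an atom of the Boolean algebra generated by $\mathcal{L}$; once this is established, the verification of the closure condition becomes a routine application of irreducibility of simple sets combined with Noetherianity of $\mathcal{Top}_p$. A subtle point to keep in mind is that the negative parts $C_i$ for $1 \leq i \leq n$ in the refined decomposition may differ from the original ones (becoming $\widetilde{C}_{B_i}$), so the refinement is not literally an extension of the given decomposition but a re-expression of it along the atoms of~$\mathcal{L}$.
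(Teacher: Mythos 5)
Your argument is correct, but it takes a genuinely different route from the paper's. The paper proceeds iteratively: for each negative set $C_j$ it adjoins the irreducible components of $\overline{C_j\cap A}$ as new positive sets (with suitable negative parts), keeping the original elementary sets $B_j\setminus C_j$ untouched, and invokes Noetherianity of $\mathcal{Top}_p$ (Lemma \ref{noe}) to show that the process terminates. You instead give a one-shot construction: close the finite family of simple sets occurring in the decomposition under intersection (legitimate by item (2) of Remark \ref{rem4}) and rewrite $A$ along the atoms $B\setminus\widetilde{C}_B$ of the Boolean algebra generated by the resulting family $\mathcal{L}$. This buys a cleaner termination argument (finiteness of $\mathcal{L}$ replaces the chain condition) and makes the closure condition transparent, since each non-empty atom is a dense relatively open subset of the irreducible closed set $B$. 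Two small points. First, your claim that each original $B_i$ still appears as a positive set requires $B_i\setminus\widetilde{C}_{B_i}\neq\emptyset$, which is not automatic since $\widetilde{C}_{B_i}$ may strictly contain $C_i$; it does hold, by irreducibility of $B_i$ (item (3) of Remark \ref{rem4}), because otherwise $B_i$ would be a finite union of strictly smaller simple sets --- this deserves a sentence. Second, as you yourself flag, your decomposition replaces the original negative sets by the $\widetilde{C}_{B_i}$, whereas the paper's construction preserves the original elementary sets and only adds new ones; that feature is what is exploited later (e.g. in Lemma \ref{jjj}) to make a decomposition elementary while keeping it adapted to an injection. For Lemma \ref{rrrrrr2} as stated your re-expression suffices, but keep the difference in mind when the lemma is reused.
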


\begin{proof} 
Let $1\leq j\leq n$.
\\
We consider $B'_1, \ldots, B'_r$, the components of the irreducible decomposition of  the closure of $A \cap C_j$.
\\
Let $A^c$ be the complement set of $A$ in $M^n$.\\
We consider $C':=B_j \cap A^c$.
\\
We add, for every $1\leq j\leq r$, the elementary set $B'_j \setminus (C'\cap B'_j)$.\\
By construction, it is clear that $\sqcup_{j=1}^r B'_j \setminus (C'\cap B'_j)\subseteq A$.\\
\\
It is equally clear, since the topology is Noetherian, that we cannot repeat this operation indefinitely.
\\
By iterating this operation as many times as necessary, we obtain a decomposition satisfying the condition of the lemma.
\end{proof}

\begin{de}\label{single} 
A decomposition $ A=\sqcup_{j=1}^n (B_j \setminus C_j)$ which satisfies all the conditions of the lemma \ref{rrrrrr2} is said
\textbf{elementary}. We say that $ n$ is the \textbf{cardinality} of the partition.
\end{de}

\begin{lem}\label{evi} 
Let $\sqcup_{j=1}^n (B_j\setminus C_j)$ be an elementary decomposition of depth $p$. 
\\
Then, for every $1\leq j\leq n$, 
$C_j\cap A=\sqcup_{\lbrace i | B_i \subseteq C_j \rbrace} (B_i \setminus C_i)$ and 
\\
$B_j\cap A= (B_j\setminus C_j) \sqcup
\cup_{\lbrace i | B_i \subseteq C_j \rbrace} (B_i \setminus C_i)$.
\end{lem}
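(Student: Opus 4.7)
The second equality follows immediately from the first: since $C_j \subseteq B_j$ and $B_j \setminus C_j \subseteq A$ by hypothesis,
\[
B_j \cap A = (B_j \setminus C_j) \sqcup (B_j \cap C_j \cap A) = (B_j \setminus C_j) \sqcup (C_j \cap A),
\]
so substituting the first identity into the last summand gives the claim. For the first identity, the inclusion $\supseteq$ is automatic: whenever $B_i \subseteq C_j$ the piece $B_i \setminus C_i$ lies both in $C_j$ and in $A$, and disjointness is free since the $B_i \setminus C_i$ already partition $A$.

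For the reverse inclusion $\subseteq$ I would combine the elementary condition with the Noetherianity of $\mathcal{T}op_p$ (Lemma~\ref{noe}) in a descent argument. Fix $y \in C_j \cap A$ and let $i^\star$ be the unique index with $y \in B_{i^\star} \setminus C_{i^\star}$; the goal is $B_{i^\star} \subseteq C_j$. Set $k_0 := j$. As long as $y \in C_{k_\ell}$, the elementary condition (Definition~\ref{single} via Lemma~\ref{rrrrrr2}) says that $\overline{C_{k_\ell} \cap A}$ decomposes into finitely many $B_i$'s of the decomposition, each contained in the closed set $C_{k_\ell}$; since $y$ lies in this closure, it lies in at least one such $B_i$, whose index I call $k_{\ell+1}$. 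Then $B_{k_{\ell+1}} \subseteq C_{k_\ell} \subsetneq B_{k_\ell}$, so the $B_{k_\ell}$ form a strictly decreasing chain of closed sets of $\mathcal{T}op_p$. Noetherianity forces termination at some step $n$ with $y \notin C_{k_n}$, i.e.\ $y \in B_{k_n} \setminus C_{k_n}$; the disjointness of the partition then forces $k_n = i^\star$, and unwinding the chain yields
\[
B_{i^\star} = B_{k_n} \subseteq C_{k_{n-1}} \subseteq B_{k_{n-1}} \subseteq \cdots \subseteq C_{k_0} = C_j.
\]

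The main subtlety is to check that this descent terminates and that it ends on the piece $B_{i^\star} \setminus C_{i^\star}$. Both points rely on the elementary structure of Lemma~\ref{rrrrrr2}: without it, the irreducible components of $\overline{C_{k_\ell} \cap A}$ would not have to coincide with $B_i$'s of the decomposition, and neither the Noetherianity argument nor the identification of the terminal piece with the unique one containing $y$ could be invoked.
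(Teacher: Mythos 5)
Your proof is correct, but it is organized differently from the paper's. The paper proves both identities by induction on the \emph{height} of $B_j$ (the length of the longest chain of positive sets ending at $B_j$): the elementary condition writes $\overline{C_j\cap A}$ as a union of positive sets $B_{i_m}$ of strictly smaller height, the induction hypothesis gives the two identities for each $B_{i_m}$, and these set identities are then assembled into the identity for $B_j$. You instead argue pointwise: fixing $y\in C_j\cap A$ and descending through positive sets containing $y$, using the elementary condition at each stage and terminating because the chain $B_{k_0}\supsetneq B_{k_1}\supsetneq\cdots$ is strictly decreasing. The two arguments rest on the same two ingredients --- the elementary condition of Lemma \ref{rrrrrr2} (forcing the irreducible components of $\overline{C\cap A}$ to be positive sets of the decomposition, each contained in $C$ because $C$ is closed in $\mathcal{Top}_p$) and the impossibility of infinite strictly decreasing chains of positive sets --- so your descent is essentially the paper's induction unrolled along the single point $y$. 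What your version buys is a cleaner write-up: the only nontrivial direction is the inclusion $C_j\cap A\subseteq\sqcup_{\{i\,|\,B_i\subseteq C_j\}}(B_i\setminus C_i)$, and you avoid the paper's bookkeeping with unions indexed by $\{i\,|\,B_i\subseteq C_{i_m}\}$, which is the least transparent part of the paper's argument. Two minor remarks: invoking Noetherianity (Lemma \ref{noe}) for termination is more than needed, since the decomposition has only finitely many positive sets and a strictly decreasing chain cannot repeat an index; and the step ``each $B_{k_{\ell+1}}$ is contained in $C_{k_\ell}$'' deserves the explicit justification that $C_{k_\ell}$, being a finite union of simple sets of depth $p$, is closed in $\mathcal{Top}_p$, so that $\overline{C_{k_\ell}\cap A}\subseteq C_{k_\ell}$; you use this implicitly, as does the paper.
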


\begin{proof} 
For this lemma, we will use the following definition:
\begin{de} 
Let $\sqcup_{j=1}^n (B_j\setminus C_j)$ be a decomposition. 
\\
Let $1\leq j\leq n$
We say that $B_j$ is of \textbf{height} $h$ if $h$ is the largest number such that there exists $h$\elmts of $[|1;h|]$, $i_1,\ldots, i_p$ with $B_{i_1}\subsetneq \ldots\subsetneq B_{i_h}=B$.
\end{de}


\medskip
\noindent
Let $1\leq j\leq n$ and let $h$ be the height of $B_j$.
\\
Let us prove the lemma by induction on $h$.

\medskip
\noindent
Suppose that $h= 1$.
  \\
Since the decomposition is elementary, the irreducible decomposition of $\overline{C_j \cap A}$ is an union of positive sets.
Since $h= 1 $, this means that $\overline{C_j \cap A}=\emptyset$. The set
$ B_j \setminus C_j$ is equal to $ B_j \cap A $ and $ C_j \cap A=\emptyset=\sqcup _{\lbrace i~|~B_i \subseteq C_i \rbrace} (B_i \setminus C_i)$ since this last union is on the void.

\medskip
\noindent
Let $h\in \N^*$. Suppose the result shown for $h$. Let us show it for $h+ 1$.
\\
Since the decomposition is elementary, there exists $ i_1, \ldots, i_k $ such that
$\overline{C_j \cap A} $ admits $\sqcup_{m=1}^k B_{i_m} $ as irreducible decomposition closed sets (in the topology generated by the\sse of depth $p$).
\\
These sets are obviously of height less than $ B_j$.\\
By hypothesis of induction, for every
$1 \leq m \leq k $, $ C_{i_m} \cap A=\sqcup _{\lbrace i~|~B_i \subseteq C_{i_m} \rbrace} (B_i \setminus C_i)$ and $ B_{i_m} \cap A=(B_{i_m} \setminus C_{i_m}) \sqcup _{\lbrace i~|~B_i \subseteq C_{i_m} \rbrace} (B_i \setminus C_i)$.
\\
Since $\overline{C_j \cap A}=\cup_{m=1}^k B_{i_m} $, we have $ C_j \cap A=\cup_{m=1}^k (B_{i_m} \cap A)$ and therefore $ C_j \cap A=[\sqcup_{m=1}^k (B_{i_m} \setminus C_{i_m})] \sqcup \cup_{\lbrace i~|~B_i \subseteq C_{i_m} \rbrace} ( B_i \setminus C_i)$.
\\
This last union is obviously equal to $\sqcup _{\lbrace i~|~B_i \subseteq C_{i_m} \rbrace} (B_i \setminus C_i)$.
\\
Finally, since $ B_j \cap A=(B_j \setminus C_j) \sqcup (C_j \cap A)$, we have the second equality sought.
\\
This completes the induction.
\end{proof}

\section{Definable injection on simple set}

\begin{de}
Let $\Gamma$ be the graph of a definable function $h$. We say that $h$ is \textbf{normal} if $\Gamma$ is definable by a simple formula.
\end{de}
\begin{lem} \label{hhhhk2} 
Let $A$ be a simple set of depth $ p$.
Let $ h$ be an injection defined on $A$ whose graph is defined by a simple formula.
Then, there exists an integer $ q $ depending only on $h$ and $p$ such that for any simple set $X\subseteq A$, of depth $p$ the image of $X$ by $h$ is a simple set of depth $q$.
We say that the integer $q$ is \textbf{adapted to $A$ and $h$}.
\end{lem}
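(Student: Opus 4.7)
The plan is to normalize the defining formula of the graph of $h$ and then push the simple-set structure of $X$ through it.

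First, using Lemmas~\ref{equivalent_extended} and~\ref{defsingl} together with the fact that $\psi(x,\cdot)$ is a singleton for each $x\in A$, I may assume the graph formula is
\[
\psi(x,y) \;\equiv\; \bigwedge_{j\in J} y_j = \beta_j(x),
\]
where $J$ is an antichain of terms in $y$ and each $\beta_j(x)$ is either a constant of $M$ or a term in $x$. Since $\psi$ determines $y$ from $x$ via the pairing bijection, $J$ must be a complete cut of the binary tree; by using the identities $y_{jl}=l(\beta_j(x))$ and $y_{jr}=r(\beta_j(x))$ to replace non-leaf constraints by their children, I may flatten $J$ to the uniform level $\{l,r\}^{p_2}$ for an integer $p_2=p_2(h)$, with each $\beta_j$ either a constant or a term in $x$ of depth at most $p_1=p_1(h)$.

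Next, fix $X\subseteq A$ a simple set of depth $p$, with partition $P_X$ and constant set $C_X$, and set $q:=p+p_2$, which depends only on $h$ and $p$. Every $w\in\{l,r\}^q$ decomposes uniquely as $w=jw'$ with $|j|=p_2$ and $|w'|=p$, and for $y=h(x)$ the identity $y_w=w'(\beta_j(x))$ expresses $y_w$ either as a constant or as a term in $x$ of depth at most $p_1+p$. The simple-set structure of $X$ at depth $p$ induces, via the pairing bijection and the no-cycle hypothesis, equalities and constant assignments among terms of $x$ at depth $p_1+p$; pulling these through yields a partition $P'$ on $\{l,r\}^q$ (declaring $w\sim_{P'} w''$ iff the corresponding values are forced to agree on $X$) and a constant set $C'$ (declaring $(w,c)\in C'$ iff the corresponding value is forced to equal $c$ on $X$). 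Let $B'$ be the simple set of depth $q$ defined by $(P',C')$.

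The inclusion $h(X)\subseteq B'$ is immediate from the construction. The reverse inclusion is the crux: given $y\in B'$, I reconstruct a preimage $x\in X$ by declaring $t_j(x):=y_j$ for each $j\in\{l,r\}^{p_2}$ with $\beta_j=t_j$ a term in $x$, the consistency of the prescription being guaranteed by the equalities defining $B'$. I then extend $x$ to the remaining nodes of $T_x$ using the pairing bijection, choosing values compatible with $P_X$ and $C_X$; the no-cycle assumption guarantees that no unintended equalities among terms of $x$ are forced, so this extension lies in $X$ and satisfies $h(x)=y$.

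The main obstacle is this reverse inclusion. The delicate point is to verify that the prescription $t_j(x)=y_j$ is consistent across all $j$'s whose $\beta_j$'s share values on $X$, and that the resulting partial specification of $x$ extends to a genuine element of $X$. Both the bijectivity of the pairing (used to determine the remaining terms of $x$) and the no-cycle property (used to prevent spurious equalities at depths beyond $p$) are essential to this reconstruction.
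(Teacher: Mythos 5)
Your route is essentially the paper's: normalize the graph formula into a conjunction $y_j=\beta_j(x)$ over a cut of the $y$-tree, transport the partition and constants of $X$ through it to get a candidate simple set $B'$ of depth $q$, and claim $h(X)=B'$. The genuine gap is the reverse inclusion $B'\subseteq h(X)$, which you correctly flag as the crux but do not prove, and which in the generality you allow is false. The normalization you invoke (Lemmas \ref{equivalent_extended} and \ref{defsingl}) only makes the formula extended in $y$; it does nothing to the terms $\beta_j(x)$, which may have filiation links with one another and sit at depths unrelated to $p$. Concretely, take $A=X=M$ (depth $p=1$) and $h$ the injection whose graph is $l(y)=l(x)\wedge r(y)=x$; this is already in your normal form with $\beta_l(x)=l(x)$ and $\beta_r(x)=x$. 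Your prescriptions $l(x):=y_l$ and $x:=y_r$ are consistent only when $y_l=l(y_r)$, i.e. $y_l=y_{rl}$, an equality between nodes of the $y$-tree at \emph{different} depths, which no partition-plus-constants datum at a single level $q$ can impose. Hence your $B'$ is all of $M$, while $h(M)=\{y: y_l=y_{rl}\}$ is a proper subset (indeed not a simple set of any depth), so $h(X)=B'$ fails; neither the bijectivity of $\Theta$ nor the no-cycle axiom repairs the clash, which has nothing to do with cycles.

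What is missing is an alignment step: one must first arrange that the $x$-terms occurring in the formula have no filiation links among themselves and match the depth-$p$ structure of $X$ with a uniform depth shift (this is what the paper attempts with its terse ``we can assume that all the terms $t'_j(x)$ are of depth $p$'', a rewriting that is only available when those depths do not exceed $p$, i.e. after enlarging $p$, which is harmless since a simple set of depth $p$ is simple at every larger depth). Your flattening goes the other way: you uniformize the $y$-side at level $p_2$ but leave the $\beta_j$ at mixed depths $\le p_1$, and your $q=p+p_2$ then records only same-level consequences of the constraints, losing exactly the cross-level ones exhibited above. Until you add such an alignment hypothesis (or reduction) and verify that after it the $\beta_j$ are pairwise without filiation, both the consistency of your prescription $t_j(x):=y_j$ and the extension of the partial $x$ to an element of $X$ remain unjustified, so the proposal as written does not establish the lemma.
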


\begin{proof} 
Let $F(x, y):=(\wedge_j t_j (y)=t'_j(x)) \wedge (\wedge_j t '' _ j (y)=c_j)$ be a normal formula defining the graph of $ h$ on $ A $, and let $p'$ the maximum of depth in $y$ of all the
terms $ t_j (y)$ and $t'' _ j (y)$.
\\
We can always replace a formula of depth $p$ by an equivalent formula of depth $q$ for any $q\geq p$.\\
Up to considering a formula equivalent to $F$ and of depth at most $p'+p$ in $y$, we can assume that all the terms $ t'_j(x)$ are of depth $p$. Let $ q $ be the depth in $y$ of such a formula.
\\
Let $\sqcup_{i=1}^k I_i $ be the partition associated with $A$.
For every $i$ let $ J_i $ be the set of elements, $ u $, of $\lbrace l, r\rbrace^q $ such that $\exists k\in I_i$
$y_u=x_k $ is implied by $F(x, y)$.
\\
Let $ C $ be the set of couples; $ (i, c)$, with $c\in M$ and $i\in \lbrace l, r\rbrace^q $ such that $y_i=c $ is implied by $F(x, y)$.
\\
We denote $\sqcup_i \tilde{J} _i $ the partition of $\lbrace l, r\rbrace^q $ generated by $\sqcup_i J_i$ and  $ C $.
\\
Let $\tilde{C} $ be the set of constants generated by $ C $ and $\sqcup_i \tilde{J}_i$.
\\
The partition $\sqcup_i \tilde{J} _i $ and the set $\tilde{C} $ define a simple set of depth $q$ which is the image of $A$ by $ h$.
\end{proof}

\begin{lem}\label{7.2} 
Let $A$ be a definable set, $ h$ a definable injection from $A$ to itself.
Then there exists two integers $p, q$ such that
\begin{itemize}
\item $A=\sqcup_i (B_i \setminus C_i)$ is a decomposition of $A$ into simple sets of depth $p$,
\item for every $ j $, the restriction of $ h$ to $ B_j \setminus C_j$ can be extended on $ B_j$ by a definable injection $ h_j$,
\item for every $ j $, $ B'_j:=h_j (B_j)$ is a simple set of depth $ q $ and $ C'_j:=h_j (C_j)$ is a finite union of simple sets of depth $ q $,
\item for every $j$, the irreducible decomposition of $C_j$ in the topology generated by the simple sets of depth $p$ has the same number of\elmts as the irreducible decomposition of $C'_i$ in the topology generated by the simple sets of depth $q$.\\
Furthermore, the number of irreducible components of $C_i$ reduced to a singleton is equal to the number of components irreducible of $C'_i$ reduced to a singleton.
\end{itemize}
\end{lem}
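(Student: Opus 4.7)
The overall strategy is to refine $A$ into pieces on which $h$ is given by a single normal formula, apply the elementary decomposition machinery of Lemma \ref{rrrrrr2}, extend $h$ on each piece via Lemma \ref{prol}, and then use Lemma \ref{hhhhk2} to transport the structure to the image side. To begin, by Lemma \ref{formidable} the graph of $h$ can be written as $\bigvee_\alpha (\phi_\alpha(x) \wedge \phi'_\alpha(x,y))$ with the sets $\phi_\alpha(M)$ pairwise disjoint and each $\phi'_\alpha$ a disjunction of positive primitive formulas. Partitioning each $\phi_\alpha(M)$ further according to which disjunct of $\phi'_\alpha$ is realized, we obtain a definable partition $A = \sqcup_\beta A_\beta$ such that on each $A_\beta$, the graph of $h\vert_{A_\beta}$ is defined by a single positive primitive formula $\psi_\beta(x,y)$; by Lemma \ref{equivalent_extended} we may take $\psi_\beta$ to be extended in $y$, hence normal.

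Choose $p$ large enough that every $A_\beta$ is a Boolean combination of simple sets of depth $p$ (Lemma \ref{Unidec}). Applying Lemma \ref{rrrrrr2} to $A$, and refining as needed so that each elementary piece lies inside a single $A_\beta$, we obtain an elementary decomposition $A = \sqcup_i (B_i \setminus C_i)$ into simple sets of depth $p$ such that on each $B_i \setminus C_i$ the function $h$ is defined by a single normal formula. By Lemma \ref{prol}, the restriction $h\vert_{B_i \setminus C_i}$ extends to a definable injection $h_i$ on $B_i$ still defined by a normal formula. Let $q_i$ be the integer adapted to $(B_i, h_i)$ by Lemma \ref{hhhhk2}, and set $q := \max_i q_i$. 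Then $B'_i := h_i(B_i)$ is a simple set of depth $q$, and writing $C_i$ as a finite union of simple subsets of $B_i$ of depth $p$ and applying Lemma \ref{hhhhk2} termwise shows that $C'_i := h_i(C_i)$ is a finite union of simple sets of depth $q$.

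It remains to verify the matching of irreducible components. Let $\bigcup_m D_{i,m}$ be the irreducible decomposition of $C_i$ in $\mathcal{Top}_p$, so the $D_{i,m}$ are the maximal simple subsets of $C_i$ at depth $p$. By Lemma \ref{hhhhk2}, each $h_i(D_{i,m})$ is simple of depth $q$, and $C'_i = \bigcup_m h_i(D_{i,m})$. The injectivity of $h_i$ ensures these images remain pairwise non-included. The main obstacle is to show this is precisely the irreducible decomposition of $C'_i$ in $\mathcal{Top}_q$, i.e.\ that no simple subset of $B'_i$ of depth $q$ strictly contains some $h_i(D_{i,m})$ while still sitting inside $C'_i$. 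This is handled by observing that the normal formula defining $h_i$ also defines its inverse $h_i^{-1}$ on $B'_i$, so by applying Lemma \ref{hhhhk2} symmetrically, the preimage under $h_i$ of any simple subset of $B'_i$ of depth $q$ is a simple subset of $B_i$ of depth $p$; inclusion is therefore preserved in both directions, and maximal-in-$C_i$ corresponds bijectively to maximal-in-$C'_i$, giving equal component counts. Finally, since $h_i$ maps singletons to singletons (and likewise for $h_i^{-1}$), the count of components reduced to a singleton is also preserved, completing the proof.
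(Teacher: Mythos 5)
Your proposal is correct and follows essentially the same route as the paper: partition $A$ (via quantifier elimination and the form of definable injections) so that $h$ is normal on each piece, decompose into $B_i\setminus C_i$ of depth $p$, extend with Lemma \ref{prol}, take $q=\max_i q_i$ from Lemma \ref{hhhhk2}, and match irreducible components and singleton components using injectivity. The only deviation, the detour through $h_i^{-1}$ to rule out larger depth-$q$ simple sets, is unnecessary (and its claim that preimages are simple of depth $p$ is not justified): since the $h_i(D_{i,m})$ are irreducible closed sets of $\mathcal{Top}_q$ that are pairwise non-included by injectivity, they are automatically the irreducible components of $C'_i$, which is exactly the argument the paper uses.
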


\begin{proof} 
By elimination of the quantifiers, there exists an integer $p$ and a partition of $A:=\sqcup_i A_i$ in definable sets such that for every $i$, $A_i$ is a simple set of depth at most $p$ and $h$ restricted to $A_i$ is a normal function.\\
As every definable set of depth $p$ is a Boolean combination of simple sets of depth $p$, by decomposing the sets $A_i$, we get a finite partition of $A$, $ A=\sqcup_i (B_i \setminus C_i)$, where
\begin{itemize}
\item for every $ j $,
the set $B_j$ is a simple sets of depth $p$ and the set $C_j$ is a finite union of simple sets of depth $p$;
\item for every $ j $, the function $h$ restricted to $B_j \setminus C_j$ is a normal function.
\end{itemize}
According to lemma \ref{prol}, for every $ j $, $ h$ restricted to $ B_j \setminus C_j$ can be extended on $ B_j$ by a normal injection that we still denote $ h_j$.
\\
According to the previous lemmas, for every $ j $, there exists $q_j$ \st for any simple set $X$ of depth $p$ and included in $B_j$, $h_j(X)$ is a simple set of depth $q_j$.
Let $q:=\max_j (q_j)$.
As any simple set of a given depth 
can be considered as a simple set of greater depth, 
for every $ j $, $ B'_j:=h_j (B_j)$ can be considered as a simple set of depth $ q $ and $ C'_j:=h_j (C_j)$ is a finite union of simple sets of depth $ q $.
Let us write, for any integer $i$, the decomposition of $C_i$ in the topology generated by the\sse of depth $p$: $C_i=\cup_j B_{i,j}$. Since $h_i$ is injective, for every $j$, the\enss $h_i(B_{i,j})$ are distinct and are the components of the irreducible decomposition of $C'_i=h_i(C_i)$ in the topology generated by the simple sets of depth $q$.
\\
It is obvious, that the number of irreducible components of $C_i$ reduced to a singleton is equal to the number of irreducible components of $C'_i$ reduced to a singleton.
\end{proof}

\begin{de} 
Let $A$ be a definable set, and $ h$ a definable injection from $A$ to itself.
Let $ p, q $ be two integers. We say that $ A=\sqcup_i (B_i \setminus C_i)$, a finite decomposition of $A$ (which is not necessarily a partition) is \textbf{adapted} to $ h$ if it satisfies the properties of lemma \ref{7.2}.
\end{de}

\section{Representation of a decomposition: tree of a decomposition}

One way to represent a definable set is to associate a tree with one of its decompositions. A branch corresponds to a chain of simple sets that are included one in the other.

\begin{de}\label{arb} 
Let $ A $ be a definable set and
$ A = \sqcup_{j = 1}^h (B_j \setminus C_j) $ a decomposition. \\
Then we can associate to $A$ the tree $ T $ whose nodes are labelled by the surjection $ \phi $ from the nodes of $ T $ to the set $  \lbrace A \rbrace \sqcup \left \{B_1 \setminus C_1, \ldots,
	B_h \setminus C_h \right \} $ in the following way:
	\begin {itemize}
	\item If $ r $ is the root of $ T $, then $ \phi (r) = A $,
	\item If $ r $ is the root and $ B_{1}, \ldots, B_{m} $ are the maximal positive sets in the sense of inclusion, then $ r $ has exactly $ m $ children $ n_1, \ldots, n_m $, and furthermore,
	$ B_{1} \setminus C_{1} = \phi (n_1), \ldots, B_{m} \setminus C_{m} = \phi (n_m) $,
	\item Let $ n $ be a node of $ T $ not equal to its root and let $ 1 \leq
	j \leq h $ be such that $ B_j \setminus C_j = \phi (n) $. Let $ B_{j_1}, \ldots, B_{j_s} $ be
	the positive maximal sets (in the sense of inclusion) among those included in
	$ C_j $; then $ n $ has $ s $ children, $ n_1 \ldots, n_s $, and furthermore,
	$ B_{j_1} \setminus C_{j_1} = \phi (n_1), \ldots, B_{j_s} \setminus C_{j_s} = \phi (n_s) $.
	\end {itemize}
	We say that $ T $ is the \textbf{tree associated to the decomposition} $ \sqcup_{j = 1}^h
	(B_j \setminus C_j) = A $.
\end{de}

\begin{rem} 
a. We want to distinguish the root $A$ from the other nodes: even if the decomposition has only one element, $ B \setminus C $, the tree has a root whose image by $\phi$ is $A$ and a leaf whose image by $\phi$ is $ B \setminus C$.
\\
b. The function $\phi$ is not a bijection: some elementary sets can be associated with several nodes of the tree. All the nodes corresponding to the same set $ B \setminus C $, are roots of the same subtree.
\\
c. In the construction proposed in the previous demonstration,
the order in which trees $T_1, \ldots, T_s $ are planted on the root
is arbitrary. Nevertheless, this is the only phenomenon that could contradicts
the uniqueness of a tree associated with a given decomposition: its graphical representation is obviously not unique, but there is a single tree associated with a given decomposition.
\end{rem}

\begin{proof} 
Let $p$ be the length of a maximum chain formed by positive sets.
Let's prove the result by induction on $ p$.
\\
Suppose that $p= 1$. Then $ A=\sqcup_{j=1}^h (B_j \setminus C_j)$ where no negative set contains positive set. (Positive sets are the closed sets ones of the irreducible decomposition of $\overline{A}$.)
The associated tree is the tree of root $A$ of depth $1$ with $h$ leaves $f_1, \ldots, f_h$ corresponding to elementary sets.
To conclude we just have to set $\phi$ as
$\phi(r)=A $, $\phi(f_i)=B_i \setminus C_i$.

\medskip
\noindent
Let $p\in \mathbb{N}^*$.
Suppose the result shown for every $1 \leq i\leq h$, show it
for $p+ 1$.
\\
Up to renumbering the sets $ B_i $, we can assume that $ B_1, \ldots, B_s $ are the maximum positive sets (in the sense of inclusion) of $A$.
\\
We consider $T_0 $ the tree of root $A$ and whose $ s $ leaves $ f_1, \ldots, f_s $, correspond to $ B_1 \setminus C_1, \ldots, B_s \setminus C_s$. \\
We can apply the hypothesis of induction to $\sqcup_{j=s+1}^h (B_j \setminus C_j)$ to get the tree $T_1$.
\\
Each node of level 1 of $T_1 $ corresponds to a set $ B_j \setminus C_j$ where $ j $ ranges between $ s+1 $ and $ h $, such that, according to the lemma \ref{evi}, $ B_j$ is included in one of the sets $ B_1, \ldots, B_s$.
\\
Up to renumbering the sets, we can assume that the children of the root of $T_1$ are $B_{s+1}, \ldots, B_{s+t}$ where $s+t\leq h$.
 We consider the tree $T$ of root $A$ whose $ s $ child $ f_1, \ldots, f_s $, correspond to $ B_1 \setminus C_1, \ldots, B_s \setminus C_s $ and such that for every $1 \leq i\leq s $ and all $ j $ between $ s+1 $ and $ s+t $, we plant on the the node corresponding to $ B_i \setminus C_i $ the subtree of $T_1 $ of root $ B_j \setminus C_j$ if and only if, $ B_i \subsetneq B_j$. 
\\
It is obvious that $T$ satisfies the conditions of the statement.
\end{proof}

Here are some examples of trees associated with a decomposition:

\begin{ex} 
Let $ B_1, B_2, B_3 $ be\sse and $ C_1 $ a union of simple sets, such as:
\begin{itemize}
\item $ B_1 $ and $ B_2$ are not included in each other
\item $ C_1 \subsetneq B_1 \cap B_2$
\item $ B_3 \subsetneq C_1$.
\end{itemize}
Let us put $ C_2:=B_1 \cap B_2$.
Let $ A=(B_1 \setminus C_1) \sqcup (B_2 \setminus C_2) \sqcup B_3$.
\\
A tree associated with this decomposition of $A$ is:
\Tree[.A [ $B_3$ ].{$B_1\setminus C_1$} {$B_2\setminus C_2$} ]

\end{ex}

\begin{ex} 
Let $ B_1, \ldots, B_3 $ simple sets such as:
\begin{itemize}
\item $ B_3 \subsetneq B_2 \subsetneq B_1$.
\end{itemize}

Let $ A=(B_1 \setminus B_2) \sqcup (B_2 \setminus B_3) \sqcup (B_3)$.

A tree associated with this decomposition of $A$ is:

\Tree [.A [ [ $B_3$ ].{$B_2\setminus B_3$} ].{$B_1\setminus B_2$}  ]

\end{ex}

\begin{lem} \label{ajf} 
Let $p\in \N$. One places oneself in $T$, the topology generated by the\sse of depth $ p$.
Let $A $ be a closed sets  and $\sqcup_{i} (B_i \setminus C_i)=A$ an elementary decomposition of $A$ into simple sets of depth $ p$.
Let $T_0$ be a tree associated with this decomposition.
Then any node of $T_0$ corresponding to $ B_j \setminus C_j$ has as many children as $ C_j$ has irreducible components in $T$.
\end{lem}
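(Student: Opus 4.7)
The plan is to identify, for each non-root node of $T_0$ labelled $B_j \setminus C_j$, the children produced by Definition~\ref{arb} with the irreducible components of $C_j$ in $\mathcal{Top}_p$. By that definition the children correspond to the maximal (for inclusion) positive sets $B_i$ of the decomposition contained in $C_j$; and by Lemma~\ref{noe} the irreducible components of $C_j$ are the maximal simple sets contained in $C_j$. So the task is to show that these two families of maximal elements coincide.

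The key preliminary step is to show that every positive set $B_i$ of the decomposition lies in $A$. Since $C_i \subsetneq B_i$, the piece $B_i \setminus C_i$ is non-empty; and since $B_i$ is irreducible in $\mathcal{Top}_p$ and $C_i \cap B_i$ is a proper closed subset, $B_i \setminus C_i$ is a non-empty open, hence dense, subset of $B_i$. Because $B_i \setminus C_i \subseteq A$ and $A$ is closed, this forces $B_i = \overline{B_i \setminus C_i} \subseteq A$. In particular $C_j \subseteq B_j \subseteq A$, so $C_j \cap A = C_j$, and $C_j$ is already closed as a finite union of simple sets. Applying the elementary condition of Lemma~\ref{rrrrrr2}, the irreducible decomposition of $\overline{C_j \cap A} = C_j$ has the form $\bigcup_{m=1}^{k} B_{i_m}$, where each $B_{i_m}$ is a positive set of the decomposition contained in $C_j$.

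It remains to check that the $B_{i_m}$ are exactly the maximal positive sets contained in $C_j$. Any positive set $B_i \subseteq C_j = \bigcup_m B_{i_m}$ is irreducible, so it lies in some $B_{i_m}$; hence every maximal positive set inside $C_j$ occurs among the $B_{i_m}$. Conversely the $B_{i_m}$, being the distinct members of an irreducible decomposition, are pairwise incomparable and are therefore each maximal. The children of the node are thus in bijection with $\{B_{i_1}, \ldots, B_{i_k}\}$, whose cardinality is the number of irreducible components of $C_j$ in $\mathcal{Top}_p$. The one delicate point of the argument is the density step upgrading $B_i \setminus C_i \subseteq A$ to $B_i \subseteq A$; without the hypothesis that $A$ is closed this would fail, and the irreducible decompositions of $C_j$ and of $\overline{C_j \cap A}$ could genuinely differ.
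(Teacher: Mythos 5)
Your proof is correct and follows essentially the same route as the paper: both hinge on the fact that, since $A$ is closed, $\overline{C_j\cap A}=C_j$, so the elementarity condition forces the irreducible components of $C_j$ to be exactly the maximal positive sets contained in $C_j$, i.e.\ the children of the node in the decomposition tree. The only difference is cosmetic: you invoke the defining condition of Lemma~\ref{rrrrrr2} directly and spell out (via the density argument) why every $B_i$, hence $C_j$, lies in $A$, whereas the paper leaves that step implicit and instead passes through Lemma~\ref{evi}, taking closures of the resulting union.
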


\begin{proof} 
Since $A$ is a closed set, for every $ j $, $ C_j \cap A=\cup_i X_i $ where $\cup_i X_i $ is the irreducible decomposition of $ C_j$.
According to the lemma \ref{evi}, $ C_j \cap A $ is also equal to $\sqcup_{i | B_i \subsetneq C_j} (B_i \setminus C_i)$. By taking the closure of this last union, it follows that every set $X_i $ is equal to one of the set $ B_i $ : these correspond to the children of the node associated with $ B_j \setminus C_j$ which therefore has as many children as $ C_j$ has irreducible components (in $T$).
\end{proof}

\section{Computation of the Grothendieck ring}

\begin{lem}\label{jjj} 
Let $A$ be a definable set and $h$ a \dd injection from $A$ to $A$.\\
Let $A=\sqcup_i (B_i\setminus C_i)$ a decomposition adapted to $h$, and $p$ an integer such that all the\elmts of this decomposition are of depth $p$. Let $T$  be the the topology whose closed sets are generated by the simple sets of depth $p$. 
Then $A$ admits an elementary decomposition in $T$ adapted to $h$.
\end{lem}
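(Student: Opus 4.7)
My plan is to apply Lemma \ref{rrrrrr2} to the given adapted decomposition in order to produce an elementary decomposition in $\mathcal{Top}_p$, and then verify that this decomposition inherits the adapted property of Lemma \ref{7.2}.

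First, I would apply Lemma \ref{rrrrrr2} to $A = \sqcup_i (B_i\setminus C_i)$ in $\mathcal{Top}_p$, obtaining an elementary decomposition $A = \sqcup_{j=1}^{N}(B'_j\setminus C'_j)$ into simple sets of depth $p$. Each new positive set $B'_j$ (beyond those coming from the original $B_i$'s) arises as an irreducible component of some closure $\overline{C_i\cap A}\subseteq C_i\subseteq B_i$, so every $B'_j$ is contained in at least one original positive set $B_{i(j)}$. For each $j$ I would produce the required normal injective extension $\tilde h_j:B'_j\to M$ by taking the restriction $\tilde h_j := h_{i(j)}|_{B'_j}$ of the normal extension already provided on $B_{i(j)}$ by the adapted property of the starting decomposition. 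Since $B'_j \subseteq B_{i(j)}$ is itself a simple set of depth $p$, this restriction is still a normal injection: its graph is defined by conjoining the simple formula for $B'_j$ with the normal formula of $h_{i(j)}$.

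The main obstacle is to ensure that $\tilde h_j$ genuinely extends $h|_{B'_j\setminus C'_j}$, i.e. that $B'_j\setminus C'_j\subseteq B_{i(j)}\setminus C_{i(j)}$ in the original decomposition, on which set $h$ and $h_{i(j)}$ are known to coincide. If the refinement of Lemma \ref{rrrrrr2} does not make this containment automatic, I would further subdivide each problematic piece along the partition $B'_j\cap A = \sqcup_i B'_j\cap(B_i\setminus C_i)$. Each of these sub-pieces is a Boolean combination of simple sets of depth $p$ by Lemma \ref{Unidec}, so the extra subdivisions only introduce positive sets that are themselves simple of depth $p$. The Noetherianity of $\mathcal{Top}_p$ (Lemma \ref{noe}) guarantees the process terminates, and the resulting finer decomposition is still elementary since the newly introduced positive sets appear both in the decomposition and in the irreducible decompositions of the relevant closures.

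Finally, the remaining clauses of the adapted property come for free. Setting $q := \max_i q_i$ where each $q_i$ is the integer adapted to $(B_i,h_i)$ via Lemma \ref{hhhhk2}, every image $\tilde h_j(B'_j)$ is a simple set of depth at most $q$, and $\tilde h_j(C'_j)$ is a finite union of such, because $C'_j$ is itself a union of positive sets of the elementary decomposition contained in $B_{i(j)}$. Injectivity of each $\tilde h_j$ transports the irreducible decomposition of $C'_j$ in $\mathcal{Top}_p$ bijectively to that of $\tilde h_j(C'_j)$ in $\mathcal{Top}_q$, preserving in particular the number of components reduced to a singleton, so the fourth clause of Lemma \ref{7.2} also holds.
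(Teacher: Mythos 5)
Your two-phase strategy (first apply Lemma \ref{rrrrrr2}, then repair adaptedness) has a genuine gap at the repair step, and the repair is not a fallback but the whole content of the lemma. First, your default choice of $i(j)$ cannot work: a new positive set $B'_j$ produced by Lemma \ref{rrrrrr2} is an irreducible component of $\overline{C_{i(j)}\cap A}$, and $C_{i(j)}$ is closed in $\mathcal{Top}_p$, so $B'_j\subseteq C_{i(j)}$ and hence $B'_j\setminus C'_j$ is \emph{disjoint} from $B_{i(j)}\setminus C_{i(j)}$; the containment you hope might be ``automatic'' never holds for a nonempty new piece with that choice, and $h_{i(j)}|_{B'_j}$ has no reason to extend $h$ on $B'_j\setminus C'_j$ (on that set $h$ is governed by other pieces of the original partition). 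So everything rests on the subdivision $B'_j\cap A=\sqcup_i B'_j\cap(B_i\setminus C_i)$, and there the argument breaks: if you replace $B'_j\setminus C'_j$ by the sub-pieces $(B'_j\cap B_i)\setminus C_i$, then $B'_j$ is no longer a positive set, and elementarity fails for every index $k$ for which $B'_j$ was an irreducible component of $\overline{C_k\cap A}$ --- the new positive sets $B'_j\cap B_i$ are in general not irreducible components of any such closure, contrary to your justification; if instead you keep $B'_j\setminus C'_j$ alongside the sub-pieces, you still owe the required extension of $h|_{B'_j\setminus C'_j}$ to all of $B'_j$, so the original problem is untouched. Moreover the sub-pieces carry new negative sets whose closures must again have their irreducible components among the positive sets, forcing you to rerun the refinement, whose new pieces again need not be adapted: the two phases must be interleaved, and an appeal to Noetherianity alone does not show the interleaved process stabilises on a decomposition that is simultaneously elementary and adapted.

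The paper does the refinement and the adaptedness bookkeeping in one step, which is exactly what removes the difficulty: when an irreducible component $X$ of some $\overline{C_j\cap A}$ is not yet a positive set, it picks $B_i$ minimal among the positive sets containing $X$, sets $C:=X\cap C_i$, and adds the single piece $X\setminus C$, which by construction lies inside $B_i\setminus C_i$ (the paper's notion of adapted decomposition explicitly does not require a partition, so the overlap is harmless). Adaptedness of the enlarged decomposition is then immediate: on $X\setminus C$ the map $h$ coincides with the normal injection $h_i$, Lemma \ref{prol} gives the normal extension to $X$, Lemma \ref{hhhhk2} gives simplicity of the images at a uniform depth $q$, and Noetherianity of $\mathcal{Top}_p$ (Lemma \ref{noe}) terminates the iteration. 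If you want to keep your presentation, you must either prove the stabilisation claim for the interleaved process or replace your subdivision by the paper's choice of piece $X\setminus(X\cap C_i)$; as written, the sentence asserting that the finer decomposition ``is still elementary'' is precisely the unproved point.
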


\begin{proof} 
Let $j$ be an index and $X$ an irreducible component of $\overline{C_j\cap A}$.
Let us assume that $X$ is not one of the positives sets of the decomposition $A=\sqcup_i (B_i\setminus C_i)$. 
Let us show that there exists $C$ a closed set of $T$ such that $A=(\sqcup_i (B_i\setminus C_i))\sqcup (X\setminus C)$ is a decomposition adapted to $h$.
\\
Since the set $X$ is irreducible, there exists an index $i$ such that $X\subseteq B_i$ and $B_i$ is minimal for this property. The minimality of $B_i$ and the fact that $X$ is included in the closure of $A$ imply that $X$ is not included in $C_i$. Let $C:=X\cap C_i$.
The set $X\setminus C$ is then included in $B_i\setminus C_i$.
\\
It is enough to check that:
\begin{itemize}
\item $h$ is defined on $X\setminus C$ by a normal formula,
\item $h$ restricted to $X\setminus C$ can be extended on $X$ by an injection $\tilde{h}$ definable by the same normal formula than $h$ on $X\setminus C$,
\item $X':=\tilde{h}(X)$ is a simple\ens of depth $q$ and $C':=\tilde{h}(C)$ is an union of \ess of depth $q$.
\end{itemize}
Since $X\setminus C$ is included in $B_i\setminus C_i$, the two first properties are obvious. Let $\tilde{h}$ be the prolongation of $h$ on $X$ that satisfies the second property.
This prolongation is defined by a normal formula, $X':=\tilde{h}(X)$ is a \ess of depth $q$ and $C':=\tilde{h}(C)$ is an union of \ess of depth $q$.

\medskip
\noindent
The decomposition $A=(\sqcup_i (B_i\setminus C_i))\sqcup (X\setminus C)$ is thus adapted to $h$. By iterating this construction, we obtain an elementary decomposition adapted to $h$.
\end{proof}

\begin{cor} 
Let $A$ a definable set, and $h$ a definable injection from $A$ into $A$.
Then $A$ admits a decomposition adapted to $h$ (in the meaning of the previous demonstration) and elementary in the topology generated by the positive and negative sets of this decomposition.
\end{cor}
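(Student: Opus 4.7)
The plan is to chain Lemmas \ref{7.2} and \ref{jjj} and then observe that ``elementary with respect to a fine topology'' descends to ``elementary with respect to the coarser topology generated by the decomposition itself.'' First I would apply Lemma \ref{7.2} to $h\colon A\to A$ to obtain two integers $p,q$ and a finite decomposition $A=\sqcup_i(B_i\setminus C_i)$ adapted to $h$, in which each $B_i$ is a simple set of depth $p$, each $C_i$ is a finite union of simple sets of depth $p$, each extension $h_i$ of $h|_{B_i\setminus C_i}$ to $B_i$ is defined by a normal formula, and the images $B'_i=h_i(B_i)$, $C'_i=h_i(C_i)$ are respectively a simple set of depth $q$ and a finite union of simple sets of depth $q$.

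Next I would apply Lemma \ref{jjj} to this decomposition inside the topology $\mathcal{Top}_p$. That lemma produces, by iteratively adjoining pieces of the shape $X\setminus(X\cap C_i)$ (for each irreducible component $X$ of $\overline{C_j\cap A}$ that is not already a positive set of the decomposition), an elementary decomposition in $\mathcal{Top}_p$ that is still adapted to $h$. The process terminates because $\mathcal{Top}_p$ is Noetherian (Lemma \ref{noe}), and at each step the added piece $X\setminus(X\cap C_i)$ is contained in some existing $B_i\setminus C_i$, so $h$ remains definable on it by the same normal formula, $\tilde h$ extends to $X$ by Lemma \ref{prol}, and $\tilde h(X)$ is still a simple set of depth $q$ by Lemma \ref{hhhhk2} with $\tilde h(X\cap C_i)$ a finite union of simple sets of depth $q$.

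Finally, I would check that the resulting decomposition $A=\sqcup_j(B_j\setminus C_j)$ is elementary not only in $\mathcal{Top}_p$ but in the (possibly coarser) topology $\mathcal{T}'$ generated by the $B_j$ and $C_j$ themselves. For this, note that each $B_j$ is a simple set of depth $p$, so it is irreducible closed in $\mathcal{Top}_p$ by Lemma \ref{noe}; being a generator of $\mathcal{T}'$, it remains closed there, and the same argument (two simple sets of the same depth are equal as soon as one strictly contains the other, via remark \ref{rem4}) shows it stays irreducible in $\mathcal{T}'$. The condition that $\overline{C_j\cap A}$ decompose irreducibly as a union of positive sets $B_{i_1},\ldots,B_{i_k}$ from the decomposition is a set-theoretic identity holding already in $\mathcal{Top}_p$; the same identity witnesses the elementary property in $\mathcal{T}'$.

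The main obstacle is the bookkeeping in the second paragraph: one must confirm that the refinement step of Lemma \ref{jjj} preserves all four bullets in the definition of ``adapted to $h$'' of Lemma \ref{7.2} simultaneously, including the matching of singleton components of $C_i$ and $C'_i$. This is precisely what was verified inside the proof of Lemma \ref{jjj}, so the present corollary is a direct packaging of those two lemmas, with only the topology-comparison argument of the third paragraph remaining to be made explicit.
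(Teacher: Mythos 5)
Your proposal is correct and follows exactly the route the paper intends: the corollary is stated without its own proof precisely because it is the immediate combination of Lemma \ref{7.2} (existence of a decomposition adapted to $h$) with Lemma \ref{jjj} (refinement to an elementary decomposition that stays adapted). Your third paragraph, checking that ``elementary in $\mathcal{Top}_p$'' passes to the coarser topology generated by the positive and negative sets of the decomposition, is a sound extra verification (closures can only grow in a coarser topology but are capped by the union of positive sets, and irreducibility trivially descends) that the paper leaves implicit.
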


\begin{teo} 
Let $M$ be a model of the theory of the pairing function without cycles. The morphism of unitary rings of $\Z[X]$ sending $X$ to $[M]$ factorises as an isomorphism between $\Z[X]/(X-X^2)$ and $K_0(M)$. In particular, as $\Z[X]/(X-X^2)$ is itself isomorphic to $\Z^2$, the Grothendieck ring $K_0(M)$ is isomorphic to $\Z^2$.
\end{teo}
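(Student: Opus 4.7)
The plan is in two steps: first verify that the ring morphism $\Z[X]\to K_0(M)$ sending $X$ to $[M]$ factors through $\Z[X]/(X-X^2)$ and is surjective, then show injectivity of the induced map $\phi: \Z[X]/(X-X^2)\to K_0(M)$.

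The factorization is immediate, since the pairing bijection $(l,r):M\to M^2$ gives $[M]=[M]^2$ in $K_0(M)$, so $X-X^2$ lies in the kernel. For surjectivity, start with an arbitrary definable set $A$. By lemma \ref{Unidec} and lemma \ref{rrrrrr2}, $A$ admits an elementary decomposition $A=\sqcup_i(B_i\setminus C_i)$ into simple sets of some common depth $p$. The lemma on simple sets gives $[B_i]\in\{1,[M]\}$. Using the identity $[B\setminus C]=[B]-[B\cap C]$ in $K_0(M)$ and induction on the height of the decomposition tree (as in lemma \ref{evi}), together with the relation $[M]^k=[M]$ for $k\geq 1$ obtained iteratively from $[M]=[M]^2$, one expresses $[A]$ as an integer combination of $1$ and $[M]$, proving surjectivity of $\phi$.

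For injectivity, I use the isomorphism $\Z[X]/(X-X^2)\simeq \Z\oplus\Z$ given by $P\mapsto(P(0),P(1))$, which sends $X\mapsto(0,1)$. It then suffices to construct two ring morphisms $\epsilon_0,\epsilon_1:K_0(M)\to\Z$ with $\epsilon_0(1)=1$, $\epsilon_0([M])=0$ and $\epsilon_1(1)=1$, $\epsilon_1([M])=1$: their combination gives a left inverse to $\phi$. I define $\epsilon_1([A])$ as the total number of elementary pieces of an elementary decomposition of $A$, and $\epsilon_0([A])$ as the number of those pieces $B_i\setminus C_i$ for which $B_i$ is a singleton. Direct computation on the trivial decompositions $M = M\setminus\varnothing$ and $\{c\}=\{c\}\setminus\varnothing$ yields the required normalizations, while additivity on disjoint unions and multiplicativity on Cartesian products follow by concatenating (resp.\ pairing) decompositions at a sufficiently large common depth.

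The main obstacle is to prove that $\epsilon_0$ and $\epsilon_1$ are invariant under definable bijection, so that they descend to $K_0(M)$. This is exactly what the structural analysis of definable injections in lemma \ref{7.2} and lemma \ref{jjj} is designed for: given a definable injection $h:A\to A'$, one can choose an adapted elementary decomposition along which $h$ restricts to a bijection between simple sets of depth $p$ on the source and simple sets of depth $q$ on the target; the last bullet of lemma \ref{7.2} then guarantees that the images of the negative pieces $C_i$ preserve both the total number of irreducible components and the number of irreducible components reduced to a singleton. Iterating through the decomposition tree of Section~9, and applying this observation to the two injections provided by any definable bijection $A\to B$, shows that $\epsilon_0([A])$ and $\epsilon_1([A])$ depend only on the class $[A]$. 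The existence of $\epsilon_0, \epsilon_1$ forces $\phi$ to be injective, so $K_0(M)\simeq \Z[X]/(X-X^2)\simeq \Z^2$.
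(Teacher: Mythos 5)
Your factorization and surjectivity argument is fine and matches the paper. The problem is in the injectivity step: the two ``counting'' morphisms $\epsilon_0,\epsilon_1$ are not well defined, even before the question of invariance under definable bijections arises. The number of elementary pieces of an elementary decomposition is not an invariant of the set being decomposed. Take $A=M$ and a point $c\in M$. Then $M=M\setminus\varnothing$ is an elementary decomposition with one piece and no singleton positive set, but $M=(M\setminus\{c\})\sqcup(\{c\}\setminus\varnothing)$ is also an elementary decomposition (the singleton $\{c\}$ is a simple set of depth $1$, defined by $x_l=l(c)\wedge x_r=r(c)$, and the closure of $C_1\cap A=\{c\}$ is exactly the positive set $\{c\}$, so the condition of Lemma \ref{rrrrrr2} / Definition \ref{single} holds), and it has two pieces, one of whose positive sets is a singleton. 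So your $\epsilon_1(M)$ would have to be both $1$ and $2$, and $\epsilon_0(M)$ both $0$ and $1$. The underlying reason is that in $K_0(M)$ one has $[B\setminus C]=[B]-[B\cap C]$, so any candidate ring morphism to $\Z$ must be computed by an inclusion--exclusion over the negative sets, not by counting pieces; a raw count of pieces (or of singleton positive sets) cannot satisfy the required normalizations $\epsilon_1([M])=1$, $\epsilon_0([M])=0$.

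Repairing this is essentially the whole content of the paper's injectivity argument, and it proceeds differently: since surjectivity reduces everything to classes $a[M]+b$ with $a,b\geq 0$, the paper compares the two specific \emph{closed} sets $(M\times F)\sqcup G$ and $(M\times F')\sqcup G'$ related by a definable bijection $h$. It takes a decomposition adapted to $h$ and elementary, forms the associated tree, and uses Lemma \ref{ajf} (for closed sets, the number of children of a node equals the number of irreducible components of the corresponding negative set) together with the last bullet of Lemma \ref{7.2} (preservation of the number of irreducible components, and of singleton components, of the $C_i$ under $h$) and the equality of total node counts to conclude that the roots of the two trees have the same number of children. For closed sets that number is intrinsic: it is the number of irreducible components, namely $|F|+|G|$; a separate count of singleton components gives $|G|=|G'|$. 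In other words, the invariants used are ``number of irreducible components of the closure'' and ``number of singleton components'', extracted only for closed sets and transported along $h$ via the adapted decomposition --- not piece counts of arbitrary decompositions. Your retraction strategy is in principle equivalent to injectivity, but as stated the functions $\epsilon_0,\epsilon_1$ do not descend even to the level of definable sets, so the key step is missing.
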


\begin{proof} 
Let $M$ be a model of the theory of the pairing functions without cycles.
Let $X$ be the class of $M$ in the Grothendieck ring. \\
It is obvious that $X=X^2$. \\
Let $A$ be a non-empty definable subset of $M$. Since $A$ is a Boolean combination of simple sets, and since every simple set is in definable bijection with $M$ or with a singleton, the class of $A$ in the \gr of
$M$ is an element of $\Z[X]/(X-X^2)$. 

\medskip
\noindent
Let us now consider $\Z[X]/(X-X^2)$ where $X$ is an undetermined variable.\\
Let us fix a family, $\mathbb{F}$, of finite sets containing for each $n\in \N$ exactly one set of cardinality $n$.\\
To each\elmts $aX+b \in \Z[X]/(X-X^2)$, with $a$ and $b$ positives, we associate the definable set $(M\times F)\sqcup G$ where $F$ is the element of $\mathbb{F}$ of cardinality $a$ and $G$ is the element of $\mathbb{F}$ of cardinality $b$.

\medskip
\noindent
Let us check that two distinct elements of this ring are associated with definable sets that do not have the same image in $K_0(M)$.
\\
In other words, let us show the following lemma:
\begin{lem}
Let $F, G, F'$ and $G'$ be finite sets. Let us assume that there exists a definable set $C$ disjoint from $(M\times F)\sqcup G$ and from $(M\times F')\sqcup G'$ such that $(M\times F)\sqcup G \sqcup C$ and $(M\times F')\sqcup G' \sqcup C$ are in definable bijection.
Then $|F|=|F'|$ and $|G|=|G'|$.
\end{lem}

\begin{proof}
Let $F, G, F', G'$ and $C$ be as in the lemma. 
We already know that $C$ is in definable bijection 
\end{proof}
Let us assume that there is such a bijection, that we will denote by $h$, between $(M\times F)\sqcup G$ and $(M\times F')\sqcup G'$.
We consider $\sqcup_{i=1}^N(B_i\setminus C_i)$ a decomposition in \ess adapted to $h$. We can furthermore assume that this decomposition is elementary.
We consider the tree $T$ associated to this decomposition as explained in \ref{arb}.

\medskip
\noindent
To each node $n_i$ of $T$ (distinct of the root), we associate $d_i$ the number of its children.
The total number of the nodes of $T$ is $N=\sum_i d_i + I$ where $I$ is the number of children of the root.

\medskip
\noindent
Since the decomposition is adapted to $h$,
$h(M)=\sqcup_{i=1}^N(B'_i\setminus C'_i)$ where for every $i$, $h(B_i\setminus C_i)=B'_i\setminus C'_i$.
Let $T'$ be a tree associated with this decomposition. To each node $n'_i$ of $T'$, we associate $d'_i$ the number of its children.
\\
Let us show that the number of children of the root of $T$ is equal to the number of children of the root of $T'$.
Since the total number of nodes of $T$ is obviously equal to the one of $T'$, $\sum_i d_i + I=\sum_i d'_i + I'$ where $I'$ is the number of children of the root of $T'$ and it is sufficient to verify that $\sum_i d'_i=\sum_i d_i$.

\medskip
\noindent
Since $(M\times F)\sqcup G$ is closed, and since the decomposition is elementary, according to the lemma \ref{ajf}, every node $n$ of $T$ corresponding to a set $B_i\setminus C_i$ has as much children as $C_i$ has irreducible components.
\\
According to the lemma \ref{7.2} and the fact that the decomposition is adapted to $h$, for every $i$, the number of irreducible components of $C'_i$ is equal to the number of irreducible components of $C_i$.
As the set $(M\times F')\sqcup G'$ is closed, the lemma \ref{ajf} implies that every node $n$ of $T'$ corresponding to a set $B'_i\setminus C'_i$ has as much children as $C'_i$ has irreducible components.
\\
The combination of these two points lead to $\sum_i d_i=\sum_i d'_i$ and thus to $I=I'$.

\medskip
\noindent
But the sets corresponding to the children of the root of $T$ (respectively $T'$) are the\elmts of the irreducible decomposition of $(M\times F)\sqcup G$ (respectively $h(M)$). As $(M\times F)\sqcup G$ has a irreducible decomposition of cardinality $|F|+|G|$ (respectively $|F'|+|G'|$), it thus follows that $|F|+|G|=|F'|+|G'|$.

\medskip
\noindent
Let 
\begin{itemize}
\item $A_1:=\lbrace i| B_i \text{ is a singleton }\rbrace$,
\item $A'_1:=\lbrace i| B'_i \text{ is a singleton }\rbrace$, 
\item $A_2:=\lbrace i| B_i \text{ is a singleton and there exists }j\neq i, B_i\subsetneq B_j \rbrace$,
\item $A'_2=\lbrace i| B'_i \text{ is a singleton and there exists }j\neq i, B'_i\subsetneq B'_j \rbrace$.
\end{itemize}

Let us notice that $|G|=|A_2|-|A_1|$ (respectively, $|G'|=|A'_2|-|A'_1|$).
The fact that $|A_1|=|A'_1|$ is obvious. Let us show that $|A_2|=|A'_2|$.
Because $(M\times F)\sqcup G$ is a closed set, the set $A_2$ is in bijection with the set $A_3:=\lbrace i| C_i \text{ admits a singleton in its irreductible components }\rbrace$. Similarly, $A'_2$ is in bijection with the set $A'_3:=\lbrace i| C'_i \text{ admits a singleton in its irreductible components }\rbrace$.
By the last point of lemma \ref{7.2}, $A_3$ and $A'_3$ have the same cardinality.
As a result, so do $A_2$ and $A'_2$ and hence $|G|=|G'|$.

\medskip  \noindent
It remains to show that $\Z[X]/(X-X^2)$ is isomorphic to $\Z^2$. This is a direct application of the Chinese remainder theorem:

$$\Z[X]/(X-X^2)\simeq \Z[X]/(X)\oplus \Z[X]/(1-X) \simeq \Z^2.$$

(An explicit isomorphism is for instance given by the homomorphism such that $1-X\mapsto (1, 0)$ and $X \mapsto (0,1)$.)

\end{proof}

\medskip
\noindent {\bf Acknowledgements.} 
I am very grateful to , Yves de Cornulier, Fran\c{c}oise Delon and David Marker for having read previous drafts and having made useful suggestions to improve them.

\end{document}